\newtheorem{thm}{Theorem}[section]
\newtheorem{cor}[thm]{Corollary}
\newtheorem{lem}[thm]{Lemma}
\newtheorem{prop}[thm]{Proposition}
\newtheorem{defi}[thm]{Definition}
\theoremstyle{definition}
\newtheorem{rem}[thm]{Remark}
\newtheorem*{notaetoile}{Notation}
\newcommand{\N}{\mathbb N}
\newcommand{\C}{\mathbb C}
\newcommand{\R}{\mathbb R}
\renewcommand{\P}{\mathbb P}
\newcommand{\E}{\mathbb E}
\newcommand{\ie}{{\it i.e. }}
\newcommand{\la}{\langle}
\newcommand{\ra}{\rangle}
\DeclareMathOperator{\id}{id}
\DeclareMathOperator{\tr}{tr}
\DeclareMathOperator{\Tr}{Tr}
\DeclareMathOperator{\Ad}{Ad}
\begin{document}

\title[Quantum random walks and minors of Hermitian Brownian Motion]{Quantum random walks and minors of Hermitian Brownian motion}

%    Information for first author
\author{Fran\c cois Chapon} 
%    Address of record for the research reported here
\address{Laboratoire de probabilit\'es et Mod\`eles Al\'eatoires, Universit\'e Paris 6, 4 place Jussieu, 75252, Paris Cedex 05}
\email{francois.chapon@upmc.fr}
\author{Manon Defosseux}
\address{Laboratoire de Math\'ematiques Appliqu\'ees \`a Paris 5, Universit\'e Paris 5, 45 rue des  Saints P\`eres, 75270 Paris Cedex 06.}
\email{manon.defosseux@parisdescartes.fr}
%    General info
%\date{a}

\begin{abstract} 

Considering quantum random walks, we construct discrete-time approximations of the eigenvalues processes of minors of Hermitian Brownian motion. It has been recently proved  by Adler, Nordenstam and van Moerbeke in \cite{Nordenstam} that the process of eigenvalues of two consecutive minors of an Hermitian Brownian motion is a Markov process, whereas if one considers more than two consecutive minors, the Markov property fails. We show that there are analog results   in the  noncommutative counterpart and establish the Markov property of eigenvalues of some particular submatrices of Hermitian Brownian motion.

\end{abstract}
\maketitle

\section{Introduction}
Let $(M(t),t\ge 0)$ be a $2\times 2$  Hermitian  Brownian motion with null trace, \ie  \[
M(t)=\left[
\begin{array}{cc}
  B_{1}(t) & B_2(t)+iB_3(t)  \\
  B_2(t)-iB_3(t)& -B_{1}(t)
\end{array}
\right],\, t\ge 0,
\]
 where $(B_1,B_2,B_3)$ is a standard Brownian motion in $\R^3$.  It\^o's calculus easily shows that the process \begin{align}\label{d=2} \big(B_1(t),\sqrt{B_{1}^2(t)+B_{2}^2(t)+B_{3}^2(t)}\big), t\ge 0,\end{align} is a Markovian process on $\R^2$. 
 Let us recall how noncommutative discrete-time approximation of this process can be constructed, following \cite{bianemeyer}.
  For this, we consider the set $\operatorname{M}_{2}(\C)$ of $2\times 2$ complex matrices, endowed with the state \[ \tr(M)=\frac{1}{2}\Tr(M), \, M\in \operatorname{M}_2(\C),\] and  the Pauli matrices
\[
x= 
\begin{pmatrix}
 0 & 1 \\
 1 & 0  
\end{pmatrix}
, \quad y=
\begin{pmatrix}
 0 & -i \\
 i & 0  
\end{pmatrix}
,\quad z=
\begin{pmatrix}
 1 & 0 \\
 0 & -1  
\end{pmatrix},
\]
which satisfy the commutation relations 
\[
[x,y]=2iz,\, [y,z]=2ix,\textrm{ and } [z,x]=2iy.
\]
The matrices $x,y$ and $z$  define three noncommutative Bernoulli variables. Consider the algebra  $\operatorname{M}_2(\C)^{\otimes \infty},$  endowed with the infinite product state, still denoted  $\tr$, defined by
\[
\tr(a_1\otimes\dots\otimes a_n\otimes I^{\otimes\infty})=\tr(a_1)\cdots\tr(a_n),\quad \text{ for } a_1,\dots, a_n\in \operatorname{M}_2(\C),
\]
where $I$ is the identity matrix of $\operatorname{M}_2(\C)$.
 Define, for all $i\in\N^*$, the elements 
\[
x_i=I^{\otimes (i-1)}\otimes x\otimes I^{\otimes\infty} , \ y_i=I^{\otimes (i-1)}\otimes y\otimes I^{\otimes\infty} , \ z_i=I^{\otimes( i-1)}\otimes z\otimes I^{\otimes\infty},
\]
as well as the partial sums
\[
X_n=\sum_{i=1}^nx_i, \ Y_n=\sum_{i=1}^n y_i , \ Z_n=\sum_{i=1}^n z_i,\quad n\ge 1.
\]
The processes $(X_n)_{n\ge 1}$, $(Y_n)_{n\ge 1}$ and $(Z_n)_{n\ge 1}$, define three classical centered \linebreak Bernoulli  random walks. Considered together, they form a noncommutative Markov process which converges, after a proper renormalization, towards a standard Brownian motion in $\R^3$ (see  Biane \cite{bianemarchesbernoulliq} for more details).  Furthermore, the  family of noncommutative random variables
\begin{equation}\label{d=2noncom} (Z_n,\sqrt{X_n^2+Y_n^2+Z_n^2}, \,n\ge 1),\end{equation}  
forms a discrete-time approximation of the Markov process (\ref{d=2}). 
Since the noncommutative process (\ref{d=2noncom}) is also Markovian (see \cite{bianemeyer}), there is a quite noticing analogy between what happens in the commutative and noncommutative cases.

In higher dimension, there are several natural ways to generalize the construction of processes (\ref{d=2}) and  (\ref{d=2noncom}). For some of them, the Markov property fails. For instance for $d\ge 2$, in the commutative case, if $(M(t),t\ge 0)$ is a $d\times d$ Hermitian Brownian motion, the process obtained by considering the eigenvalues of two consecutive minors of $(M(t)$, $t\ge 0)$, is Markovian whereas the Markovianity fails if one considers more than two consecutive minors, as it has been recently proved  in \cite{Nordenstam}, and   announced  in \cite{manon}. This result has  also an analogue in a noncommutative framework as we shall see in the sequel.

 In this paper we extend to higher dimensions the construction of the noncommutative process (\ref{d=2noncom}).
 For this we need some basic facts about representation theory of Lie algebra recalled in section \ref{Universal enveloping algebra}.   In section \ref{Quantum Markov chain} we recall the construction of quantum Markov chains. The Markovian aspects are studied more specifically in section \ref{Restriction to a subalgebra} using some existing results of invariant theory. In particular we discuss the Markovianity  of noncommutative analogues of the processes of eigenvalues of consecutive minors. In the last section, considering the limit of the noncommutative processes previously studied, we discuss the  Markovianity of some natural generalizations of  the process (\ref{d=2}). 
 
\section{Universal enveloping algebra}\label{Universal enveloping algebra}

Let $G=\operatorname{GL}_d(\mathbb C)$ be the group of $d\times d$ invertible matrices, and $\mathfrak{g}=\operatorname{M}_d(\mathbb C)$ its Lie algebra, which is the algebra of $d\times d$ complex matrices.
 Letting $e_{ij}$, $i,j=1,\dots,d$, be the standard basis in $\operatorname{M}_d(\mathbb{C})$, the universal enveloping algebra  $\mathcal U(\mathfrak{g})$ of $\mathfrak{g}$ is the associative algebra generated by $e_{ij}$, $i,j=1,\dots,d$, with 
 no relations among the generators other than the following 
  commutation relations
\[
[e_{ij},e_{kl}]=\delta_{jk}e_{il}-\delta_{il}e_{kj},
\]
where $[\cdot,\cdot]$ is the usual bracket   of $\mathfrak g$. By the Poincar\'e-Birkhoff-Witt theorem (see \cite{zelo}),   there exists a basis of $\mathcal U(\mathfrak g)$ composed of monomials 
\[
e_{i_1j_1}\cdots e_{i_mj_m},
\]
where the integers $i_k,j_k$ are taken in a certain order. Hence, writing an element of $\mathcal U(\mathfrak g)$ in this basis, its degree is defined as the degree of its leading term.
For $n\in \N$, we denote $\mathcal U_n(\mathfrak{g})$ the set of elements  of $\mathcal U(\mathfrak{g})$ whose leading term is of degree smaller than $n$.  Recall that a representation of $\mathfrak g$ in a finite dimensional vector space $V$ is a Lie algebra homomorphism
\[
\rho\colon \mathfrak g\to \operatorname{End}(V).
\]
Then any representation $\rho$ of $\mathfrak{g}$ extends uniquely to the universal enveloping algebra letting 
\[
\rho(xy)=\rho(x)\rho(y),\quad x,y\in \mathcal U(\mathfrak{g}).
\]
Let $I$ be the  identity  matrix of size $d\times d$. 
The coproduct on $\mathcal U(\mathfrak g)$ is the algebra homomorphism $\Delta\colon \mathcal U(\mathfrak g) \to \mathcal U(\mathfrak g)\otimes \mathcal U(\mathfrak g)$ defined on the generators by
\begin{align*}
\Delta (I)&= I\otimes I\\
\Delta (e_{ij})&= I\otimes e_{ij}+ e_{ij}\otimes I,\, \textrm{
if $i\ne j$, }i,j=1,\dots,d\\
\Delta (h_{i})&= I\otimes h_{i}+ h_{i}\otimes I, \, i=1,\dots,d-1,
\end{align*}
 where $h_i=e_{ii}-e_{i+1 i+1}$. This characterizes entirely  $\Delta$ 
 letting 
\[
\Delta(xy)=\Delta(x)\Delta(y), \quad x,y\in \mathcal{U}(\mathfrak{g}),
\]
where the product on  $\mathcal U(\mathfrak g)\otimes \mathcal U(\mathfrak g)$ is defined in the usual way $(a\otimes b) (c\otimes d)=ac\otimes bd$, for $a,b,c,d\in\mathcal U(\mathfrak g)$.
The tensor product of two representations  $\rho_1\colon \mathfrak g\to \operatorname{End}(V_1)$ and $\rho_2\colon\mathfrak g\to\operatorname{End}(V_2)$ and its extension to  $\mathcal U(\mathfrak g)$
\[
\rho_1\otimes\rho_2\colon    \mathcal{U}(\mathfrak g) \to \operatorname{End}(V_1\otimes V_2)
\]
 is given by
\[
\rho_1\otimes \rho_2(x)=(\rho_1\otimes \rho_2)\Delta(x), \quad x\in \mathcal U(\mathfrak g),
\]
where $(\rho_1\otimes \rho_2)(x_1\otimes x_2)=\rho_1(x_1)\otimes \rho_2(x_2)$, for $x_1,x_2\in    \mathcal U(\mathfrak g)$. 
For a representation $\rho$ of $\mathfrak{g}$, we define recursively the representation $\rho^{\otimes n}$ of $\mathcal{U}(\mathfrak{g})$ by
\[
\rho^{\otimes n}(x):=(\rho^{\otimes n-1}\otimes\rho)\Delta(x), \quad x\in \mathcal{U}(\mathfrak{g}).
\]

\section{Quantum Markov chain}\label{Quantum Markov chain}

We first recall some basic facts about noncommutative probability, which can be found in \cite{meyerquantum} for example. 
A noncommutative probability space $(\mathcal A,\varphi)$ is composed of a unital $^\ast$-algebra, and a state $\varphi\colon \mathcal A\to\C$, that is a positive linear form,  in the sense that $\varphi(aa^*)\geq0$ for all $a\in\mathcal A$, and normalized, \ie $\varphi(1)=1$.  Elements of $\mathcal A$ are called noncommutative random variables. 
Note that classical probability is recovered, at least for bounded random variables,  by letting $\mathcal A=L^\infty(\Omega,\P)$ for some probability space $(\Omega,\P)$, and $\varphi$ being the expectation $\E$. The law of a family $(a_1,\ldots,a_n)$ of noncommutative random variables is defined as the collection of $^\ast$-moments
\[
\varphi(a_{i_1}^{\varepsilon_1}\cdots a_{i_k}^{\varepsilon_k}),
\]
where for all $j=1,\ldots,k$, $i_j\in\{1,\ldots,n\}$, $\varepsilon_j\in\{1,\ast\}$,  and $k\geq1$. Thus,
convergence in distribution means convergence of all $^\ast$-moments.

Recall that a von Neumann algebra is a subalgebra of the algebra of bounded operators on some Hilbert space, closed under the strong topology.
Define $\mathcal W=\operatorname{M}_d(\C)^{\otimes \infty}$ the infinite tensor product  in the sense of von Neumann algebras, with respect to the product state $\omega=\tr^{\otimes \infty}$, where $\tr=\frac{1}{d}\Tr$ is the normalized trace on $\operatorname{M}_d(\C)$. Hence, $(\mathcal W,\omega)$ is  a noncommutative probability space. For $a_1,\dots,a_n\in \operatorname{M}_d(\C)$, we use the notation $a_1\otimes\dots\otimes a_n$ instead of $a_1\otimes\dots\otimes a_n\otimes I^{\otimes \infty}.$ Let us now recall the construction of quantum Markov chains, as it can be found in \cite{bianemeyer}. First, let us see how classical Markov chains can be translated in the noncommutative formalism. If $(X_n)_{n\geq1}$ is a  classical Markov chain defined on some probability space $(\Omega,\P)$ and taking values in a measurable space $E$, then for each $n\geq1$, the random variable $X_n\colon \Omega\to E$ gives rise to an algebra homomorphism 
\begin{align*}
\chi_n\colon L^\infty(E)&\to L^\infty(\Omega)\\
f\ \ &\mapsto f(X_n).
\end{align*}
 Hence, one can think of a noncommutative random variable as an algebra homomorphism. The Markov property of $(X_n)_{n\geq1}$ writes
 \[
 \E(Yf(X_{n+1}))=\E(Y Qf(X_n)),
 \]
for all $\sigma(X_1, \ldots, X_n)$-measurable random variable $Y$, and where $Q\colon L^\infty(\Omega)\to L^\infty(\Omega)$ is the transition operator of $(X_n)_{n\geq1}$. Translating this in the homomorphism formalism, we get
\[
\E(\psi \chi_{n+1} (f))=\E(\psi \chi_{n}(Qf)),
\] 
where $\psi$ is in the subalgebra of $L^\infty(\Omega)$ generated by $X_1,\ldots,X_n$. 

Let us pass to the construction properly speaking of the quantum Markov chain considered here.  Let $\rho$ be the standard representation of $\mathfrak{g}$. We consider the morphism
\begin{align*}
j_n\colon \mathcal U(\mathfrak{g})&\to \mathcal W\\
x &\mapsto  \rho^{\otimes n}(x),
\end{align*}
for all $n\geq1$. Define
$P\colon \mathcal{U}(\mathfrak{g})\to \mathcal U(\mathfrak{g})$ by
\[
P=\id \otimes\eta \circ \Delta,
\]
 where $\eta(\cdot)=\tr(\rho(\cdot))$. $P$ is a unital completely positive map, which is the analogue of Markov operator in the quantum context. We have that  $(j_n)_{n\geq1}$ is a quantum Markov chain, in the sense that it satisfies the following Markov property.
\begin{prop}
For all $\xi$ in the von Neumann algebra generated by $\{j_k(\mathcal U(\mathfrak{g})), k\leq n-1\}$, and all $x\in\mathcal U(\mathfrak{g})$,
\[
\omega(j_n(x)\xi)=\omega(j_{n-1}(Px)\xi).
\]
\end{prop}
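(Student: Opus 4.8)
The plan is to reduce the statement to a computation inside a finite tensor product. Set $\mathcal W_{n-1}=\operatorname{M}_d(\C)^{\otimes(n-1)}\subseteq\mathcal W$, embedded by tensoring with $I^{\otimes\infty}$. Since $\rho$ is finite dimensional, $j_k(x)=\rho^{\otimes k}(x)$ is a genuine element of $\operatorname{M}_d(\C)^{\otimes k}\subseteq\mathcal W_{n-1}$ for every $k\le n-1$ and every $x\in\mathcal U(\mathfrak g)$; as $\mathcal W_{n-1}$ is finite dimensional it is automatically strongly closed, hence the von Neumann algebra generated by $\{j_k(\mathcal U(\mathfrak g)):k\le n-1\}$ is contained in $\mathcal W_{n-1}$. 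So it suffices to prove the identity for an arbitrary $\xi\in\mathcal W_{n-1}$.

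First I would unfold the definitions. By the recursive definition of $\rho^{\otimes n}$ one has $j_n=(\rho^{\otimes(n-1)}\otimes\rho)\circ\Delta=(j_{n-1}\otimes\rho)\circ\Delta$, so writing $\Delta(x)=\sum_i a_i\otimes b_i$ with $a_i,b_i\in\mathcal U(\mathfrak g)$,
\[
j_n(x)=\sum_i j_{n-1}(a_i)\otimes\rho(b_i),
\]
where $j_{n-1}(a_i)$ acts on the first $n-1$ tensor factors and $\rho(b_i)$ on the $n$-th. For $\xi\in\mathcal W_{n-1}$ the factor $\xi$ commutes past the $n$-th slot, so $j_n(x)\xi=\sum_i\big(j_{n-1}(a_i)\xi\big)\otimes\rho(b_i)$. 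Then I would use that $\omega=\tr^{\otimes\infty}$ is a product state, hence factorizes across the $n$-th factor:
\[
\omega(j_n(x)\xi)=\sum_i\omega\big(j_{n-1}(a_i)\xi\big)\,\tr\big(\rho(b_i)\big)=\sum_i\eta(b_i)\,\omega\big(j_{n-1}(a_i)\xi\big),
\]
using $\eta(\cdot)=\tr(\rho(\cdot))$. Finally, since each $\eta(b_i)$ is a scalar and $j_{n-1}$ is linear, $\sum_i\eta(b_i)j_{n-1}(a_i)=j_{n-1}\big(\sum_i a_i\,\eta(b_i)\big)=j_{n-1}(Px)$, because $Px=(\id\otimes\eta)\Delta(x)=\sum_i a_i\,\eta(b_i)$. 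Hence $\omega(j_n(x)\xi)=\omega(j_{n-1}(Px)\xi)$, as desired.

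There is no serious obstacle here: the argument is essentially bookkeeping once one has the right picture. The two points that deserve care are (i) the reduction to $\mathcal W_{n-1}$, i.e. checking that the generated von Neumann algebra really sits inside the finite-dimensional corner $\operatorname{M}_d(\C)^{\otimes(n-1)}$, which rests on the finite-dimensionality of $\rho$ together with the fact that $\operatorname{M}_d(\C)^{\otimes(n-1)}$ is already strongly closed; and (ii) the compatibility of the slice computation with the product state, which is just the statement that $\omega$ restricted to $\operatorname{M}_d(\C)^{\otimes n}$ equals $\tr^{\otimes n}$ (and that $j_n(x)$ is a bona fide bounded operator, so all the occurrences of $\omega$ above are well defined). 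Conceptually, the whole computation is the identity $E_{n-1}\circ j_n=j_{n-1}\circ P$, where $E_{n-1}\colon\mathcal W\to\mathcal W_{n-1}$ is the $\omega$-preserving conditional expectation obtained by applying $\tr$ to every factor from the $n$-th on; the displayed Markov property is then immediate from $\omega\circ E_{n-1}=\omega$ and the $\mathcal W_{n-1}$-bimodule property of $E_{n-1}$.
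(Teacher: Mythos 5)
Your proof is correct and takes essentially the same approach as the paper: unfold $j_n=(j_{n-1}\otimes\rho)\circ\Delta$ using Sweedler-style notation, factor the product state $\omega$ across the $n$-th tensor slot to pull out $\eta$, and recognize the result as $j_{n-1}(Px)$. The only difference is that the paper silently takes $\xi$ to be an elementary tensor $a_1\otimes\cdots\otimes a_{n-1}$ and leaves the reduction implicit, whereas you justify the reduction by noting that the von Neumann algebra generated by $\{j_k(\mathcal U(\mathfrak g)):k\le n-1\}$ sits inside the finite-dimensional (hence already strongly closed) algebra $\operatorname{M}_d(\C)^{\otimes(n-1)}$ — a small but genuine improvement in rigor.
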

\begin{proof}
Let  $\xi=a_1\otimes\cdots\otimes a_{n-1}$, where the $a_i$'s are in $\operatorname{M}_d(\C)$. Using Sweedler's notation
\[
\Delta (x) =\sum x^1\otimes x^2,
\] we have on one hand
\begin{align*}
\omega(j_n(x)\xi)&=\omega((\rho^{\otimes n-1}\otimes\rho)\Delta(x)\xi)\\
&=\sum \omega\left(\rho^{\otimes n-1}(x^{1})\otimes \rho(x^{2}) \xi\right),
\end{align*}
so,
 \begin{align*}
\omega(j_n(x)\xi)=\sum \tr(\rho^{\otimes n-1}(x^{1}) \xi)\tr(\rho(x^{2})).
\end{align*} 
On the other hand, \[ Px=\sum x^1\eta(x^2). \]
Thus 
\[
j_{n-1}(Px)=\sum \eta(x^2) \rho^{\otimes n-1}(x^{1}),
\]
and 
 
\[
\omega (j_{n-1}(Px)\xi)=\sum \tr\left(\rho^{\otimes n-1}(x^{1})\xi\right)\tr\left(\rho(x^2)\right),
\]
which achieves the proof.
\end{proof}

\section{Restriction to a subalgebra}\label{Restriction to a subalgebra}

Recall that the group $G$ acts on $\mathfrak{g}$ via the adjoint action, \ie the conjugation action, given by
\[
\operatorname{Ad}(g)x=gxg^{-1},\quad g\in G,\, x\in \mathfrak{g}.
\]
 This action extends uniquely to an action on $\mathcal U(\mathfrak{g})$ letting  \[\Ad(g)(xy)=(\Ad(g)x)(\Ad(g)y),\quad g\in G,\, x\in \mathcal{U}(\mathfrak{g}).\]
The group $G$ acts on $ \mathcal{U}(\mathfrak{g})\otimes  \mathcal{U}(\mathfrak{g})$ via the action
\[ \Ad(g)(x\otimes y)=(\Ad(g)x)\otimes (\Ad(g)y),\quad g\in G,\, x,y\in \mathcal{U}(\mathfrak{g}).\]
Note that the morphism $\Delta$ satisfies 
\begin{align}
\Delta(\Ad(g)x)=\Ad(g)\Delta(x),\quad g\in G, \,x\in\mathcal{U}(\mathfrak{g}).
\end{align}
The next proposition shows that the operator $P$ commute with  the adjoint action.

\begin{prop} \label{Commutation} For all $g\in G$, and all $x\in\mathcal{U}(\mathfrak{g})$, we have
\[
\Ad(g)P(x)=P(\Ad(g)x).
\]
\end{prop}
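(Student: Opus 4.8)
The plan is to deduce the identity from two ingredients: the intertwining relation $\Delta(\Ad(g)x)=\Ad(g)\Delta(x)$ between the coproduct and the adjoint action recorded just above the proposition, and the $\Ad$-invariance of the linear functional $\eta=\tr\circ\rho$ on $\mathcal U(\mathfrak g)$. Granting both, the computation is purely formal.

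First I would check that $\eta(\Ad(g)x)=\eta(x)$ for every $g\in G$ and every $x\in\mathcal U(\mathfrak g)$. It suffices to treat a monomial $x=e_{i_1j_1}\cdots e_{i_mj_m}$, since $\Ad(g)$, $\rho$, and conjugation by $g$ on $\operatorname{End}(\C^d)$ are all linear. For such a monomial, $\Ad(g)x=(ge_{i_1j_1}g^{-1})\cdots(ge_{i_mj_m}g^{-1})$, and applying the multiplicative extension to $\mathcal U(\mathfrak g)$ of the standard representation $\rho$, the inner factors $g^{-1}g$ telescope, giving $\rho(\Ad(g)x)=g\,\rho(x)\,g^{-1}$. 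Hence $\eta(\Ad(g)x)=\tr(g\rho(x)g^{-1})=\tr(\rho(x))=\eta(x)$ by cyclicity of the trace.

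Then I would combine this with the intertwining relation. Writing $\Delta(x)=\sum x^1\otimes x^2$ in Sweedler's notation, one has $\Delta(\Ad(g)x)=\sum \Ad(g)x^1\otimes\Ad(g)x^2$; applying $\id\otimes\eta$ and using the invariance just proved,
\[
P(\Ad(g)x)=\sum(\Ad(g)x^1)\,\eta(\Ad(g)x^2)=\sum(\Ad(g)x^1)\,\eta(x^2)=\Ad(g)\Big(\sum x^1\,\eta(x^2)\Big)=\Ad(g)P(x),
\]
the penultimate equality holding because $\Ad(g)$ is an algebra endomorphism of $\mathcal U(\mathfrak g)$, hence commutes with finite sums and with multiplication by the scalars $\eta(x^2)$. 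This gives the stated commutation.

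The only genuinely non-formal point is the first step, namely verifying that $\rho$ converts the adjoint action on $\mathcal U(\mathfrak g)$ into matrix conjugation on $\operatorname{End}(\C^d)$; everything else is bookkeeping with $\Delta$ and Sweedler's notation, and no analytic subtlety arises since all the identities involved are between finite linear combinations of monomials in $\mathcal U(\mathfrak g)$.
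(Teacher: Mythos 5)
Your proof is correct and follows essentially the same route as the paper: both use the intertwining relation $\Delta(\Ad(g)x)=\Ad(g)\Delta(x)$ together with the $\Ad$-invariance of $\eta$. The only difference is that the paper disposes of the invariance of $\eta$ with the brief remark ``since $\eta$ is a trace,'' whereas you spell out why, by checking $\rho(\Ad(g)x)=g\rho(x)g^{-1}$ on monomials and invoking cyclicity of the trace — a useful clarification but not a different argument.
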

\begin{proof} Using the notation $\Delta x=\sum x^1\otimes x^2$ for $x\in\mathcal U(\mathfrak{g})$, we have 
\[
\Ad(g)P(x)=\Ad(g)\left(\sum x^1\eta(x^2)\right)=\sum \Ad(g)x^1\eta(x^2),
\]
and
\begin{align*}
P(\Ad(g)x)&=\id\otimes\eta\circ \Delta(\Ad(g)x)=\id\otimes\eta\left(\Ad(g)\Delta x\right)\\
&= \sum \Ad(g)x^1\eta (x^2), 
\end{align*}
since $\eta$ is a trace.
\end{proof}

 \begin{defi} For a subgroup $K$ of $G$, an element $x\in \mathcal{U}(\mathfrak{g})$ is said to be $K$-invariant if \[\Ad(g)x=x, \, \forall g\in K.\] 
 \end{defi}
 The set of $K$-invariant elements of  $\mathcal{U}(\mathfrak{g})$ is denoted $\mathcal{U}(\mathfrak{g})^K$. For $n\in \N$, we denote $\mathcal U_n(\mathfrak{g})^K$ the subset of $\mathcal U(\mathfrak{g})^K$ of elements whose leading term is of degree smaller than $n$. Proposition \ref{Commutation} implies the following one, which is fundamental for our purpose.
 \begin{prop}\label{UKstable}
 Let $K$ be a subgroup of $G$. The subalgebra $\mathcal U(\mathfrak g)^K$ of $K$-invariant elements of $\mathcal U(\mathfrak g)$ is stable by $P$, \ie
 \[
 P\, \mathcal U(\mathfrak g)^K\subset \mathcal U(\mathfrak g)^K.
 \]
 Hence, the restriction of $(j_n)_{n\geq1}$ to $\mathcal U(\mathfrak g)^K$ defines a quantum Markov chain.
 \end{prop}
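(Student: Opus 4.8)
The plan is to obtain both assertions as direct consequences of Proposition~\ref{Commutation}. First I would note that $\mathcal U(\mathfrak g)^K$ is a unital subalgebra of $\mathcal U(\mathfrak g)$: each $\Ad(g)$, $g\in K$, is an algebra automorphism of $\mathcal U(\mathfrak g)$ fixing the unit, so the set of elements fixed by all of them is a linear subspace, is closed under products, and contains $1$. This is the only structural fact required, and it is immediate from the definition of the $\Ad$-action on $\mathcal U(\mathfrak g)$.

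For the stability $P\,\mathcal U(\mathfrak g)^K\subset\mathcal U(\mathfrak g)^K$, let $x\in\mathcal U(\mathfrak g)^K$ and $g\in K$. Applying Proposition~\ref{Commutation} and then the $K$-invariance of $x$ gives $\Ad(g)P(x)=P(\Ad(g)x)=P(x)$. As $g\in K$ was arbitrary, $P(x)$ is $K$-invariant, i.e. $P(x)\in\mathcal U(\mathfrak g)^K$. (One may add that $P$ does not raise the degree of the leading term, so in fact $P\,\mathcal U_n(\mathfrak g)^K\subset\mathcal U_n(\mathfrak g)^K$ for every $n$, although this refinement is not needed here.)

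It remains to check that the restricted family still defines a quantum Markov chain. Each $j_n$ restricts to a morphism $\mathcal U(\mathfrak g)^K\to\mathcal W$, and by the previous step $P$ restricts to a map $\mathcal U(\mathfrak g)^K\to\mathcal U(\mathfrak g)^K$ which is still unital and completely positive, being the restriction of a unital completely positive map to a subalgebra. The Markov identity is then inherited from the Markov property of $(j_n)_{n\geq1}$ established above: for $x\in\mathcal U(\mathfrak g)^K$ and $\xi$ in the von Neumann algebra generated by $\{j_k(\mathcal U(\mathfrak g)^K),\ k\leq n-1\}$---a sub-von Neumann algebra of the one generated by $\{j_k(\mathcal U(\mathfrak g)),\ k\leq n-1\}$---one has $\omega(j_n(x)\xi)=\omega(j_{n-1}(Px)\xi)$ with $Px\in\mathcal U(\mathfrak g)^K$, so the whole computation takes place inside the restricted algebra. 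I do not expect any real obstacle: the content is entirely in Proposition~\ref{Commutation}, and the only points needing a word of care are that the restriction of a completely positive map stays completely positive and that the commutation identity yields $P(x)\in\mathcal U(\mathfrak g)^K$ rather than merely some weaker invariance.
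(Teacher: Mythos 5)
Your proof is correct and takes exactly the approach the paper intends: the paper offers no explicit argument, stating only that Proposition~\ref{Commutation} implies the stability, and your computation $\Ad(g)P(x)=P(\Ad(g)x)=P(x)$ for $g\in K$ is precisely the intended one-line deduction. The remarks about $\mathcal U(\mathfrak g)^K$ being a unital subalgebra and the Markov identity being inherited by restriction are accurate fillings-in of what the paper leaves implicit.
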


Let us focus on  some particular invariant sets related to the minor process studied in \cite{Nordenstam}. For a fixed integer $p\in \{0,\dots,d-1\}$,  we consider the block diagonal subgroup $\operatorname{GL}_{d-p}(\mathbb{C})\times \mathbb{C^*}^{p}$ of $G$ which consists of elements of the form
\[
\left(
\begin{array}{cccc}
k & & \phantom{\ddots} 
\multirow{2}{*}{\huge 0}&  \\
& k_1 & &   \\
 & \multirow{2}{*}{\huge 0}\ \ & \ddots & \\
  &  & &  k_p
\end{array}
\right),
\]
with $k\in \operatorname{GL}_{d-p}(\mathbb{C})$, and $k_1,\ldots, k_p\in\C^*$.
For $l,m\in \N^*$, we denote $\mathcal{M}_{l,m}$ the set of $l\times m$ matrices with noncommutative entries in $\mathcal{U}(\mathfrak{g})$. We let $\mathcal{M}_{l}=\mathcal{M}_{l,l}$. The rules to add or multiply matrices of $\mathcal{M}_{l,m}$ are the same as those for the commutative case replacing the usual addition and multiplication in a commutative algebra by the addition and the multiplication in $\mathcal{U}(\mathfrak{g})$. Moreover, if $M=(m_{ij})_{1\le i,j\le l}$ is a matrix in $\mathcal{M}_{l}$,  then the element of $\mathcal{U}(\mathfrak{g})$ equal to $\sum_{i=1}^l m_{ii}$ is denoted ${\bf Tr}(M)$. We partition the matrix  $E=(e_{ij})_{1\le i,j\le d}$  in block matrices in the form
 \[
 E=  \left[\begin{array}{ccc}
    E_{11} & \dots & E_{1p+1} \\ 
    \vdots &  & \vdots \\ 
    E_{p+11} & \dots & E_{p+1p+1} \\ 
  \end{array}\right],
  \]
 where  $E_{11}\in \mathcal{M}_{d-p,d-p}$, $E_{1i}\in \mathcal{M}_{1,d-p}$, $E_{i1}\in \mathcal{M}_{d-p,1}$, $i\in\{2,\dots, p+1\}$, and 
 $E_{ij}\in \mathcal{U}(\mathfrak{g})$, $i,j\in\{ 2,\dots,d\}$.

 \begin{notaetoile}
 Entries of a matrix will be always  denoted by small letters, while capital letters will refer to the partition defined above.
 \end{notaetoile}
 
 The next theorem, which has been proved by Klink and Ton-That, gives the generators of the subalgebra  $\mathcal{U}(\mathfrak{g})^{\operatorname{GL}_{d-p}(\mathbb{C})\times \mathbb{C^*}^{p}}$.
 
\begin{thm}[\cite{Klink}] \label{theoklink}The subalgebra  $\mathcal{U}(\mathfrak{g})^{\operatorname{GL}_{d-p}(\mathbb{C})\times \mathbb{C^*}^{p}}$ is finitely generated by the constants  and elements 
\[
{\bf Tr}(E_{i_1i_2}\cdots E_{i_qi_1}),\quad q\in \N^*, \, i_1,\dots,i_q\in\{1,\dots,p+1\}.
\]
\end{thm}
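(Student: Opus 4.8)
The plan is to reduce the statement about the non-commutative algebra $\mathcal U(\mathfrak g)$ to a statement about ordinary polynomial invariants by passing to the associated graded algebra, and then to recognize the latter as an instance of the first fundamental theorem of classical invariant theory. Write $K=\operatorname{GL}_{d-p}(\mathbb C)\times(\mathbb C^*)^p$; it is reductive, and the degree filtration $\big(\mathcal U_n(\mathfrak g)\big)_{n\ge0}$ is $\Ad(K)$-stable. Applying the functor of $K$-invariants, which is exact since $K$ is reductive, to the short exact sequences $0\to\mathcal U_{n-1}(\mathfrak g)\to\mathcal U_n(\mathfrak g)\to S^n(\mathfrak g)\to0$ yields $\operatorname{gr}\big(\mathcal U(\mathfrak g)^K\big)\cong S(\mathfrak g)^K$, the isomorphism being induced by the symbol map. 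A standard filtered--graded induction then shows that any lifts to $\mathcal U(\mathfrak g)^K$ of a generating set of $S(\mathfrak g)^K$ generate $\mathcal U(\mathfrak g)^K$. Identifying $S(\mathfrak g)=\mathbb C[\mathfrak g]$ through the $\Ad$-invariant trace form, the symbol of $\mathbf{Tr}(E_{i_1i_2}\cdots E_{i_qi_1})\in\mathcal U(\mathfrak g)^K$ is --- up to lower degree terms coming from reordering non-commuting factors --- the polynomial $M\mapsto\Tr(M_{i_1i_2}\cdots M_{i_qi_1})$ on $\mathfrak g=\operatorname{M}_d(\mathbb C)$, where $M_{ij}$ is the $(i,j)$ block of $M$ along the partition $(d-p,1,\dots,1)$. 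So the theorem is equivalent to the assertion that $\mathbb C[\operatorname{M}_d(\mathbb C)]^K$ is generated by the functions $f_{i_1\cdots i_q}(M)=\Tr(M_{i_1i_2}\cdots M_{i_qi_1})$, $q\ge1$, $i_1,\dots,i_q\in\{1,\dots,p+1\}$.

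For the commutative statement, conjugation by $\operatorname{diag}(g_1,\dots,g_{p+1})\in K$ sends $M_{ij}$ to $g_iM_{ij}g_j^{-1}$, so each $f_{i_1\cdots i_q}$ is $K$-invariant by cyclicity of the trace. For the converse, regard $\mathfrak g$ first as a $\operatorname{GL}_{d-p}(\mathbb C)$-module: the block $M_{11}$ is $\operatorname{End}(\mathbb C^{d-p})$ with the conjugation action, the blocks $M_{1j}$ ($j\ge2$) are copies of $\mathbb C^{d-p}$, the blocks $M_{j1}$ are copies of $(\mathbb C^{d-p})^*$, and the scalar blocks $M_{ij}$ ($i,j\ge2$) are fixed. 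By Procesi's first fundamental theorem for $\operatorname{GL}_{d-p}$ acting on several matrices, vectors and covectors, $\mathbb C[\mathfrak g]^{\operatorname{GL}_{d-p}(\mathbb C)}$ is generated by traces of words in $M_{11}$, by the contractions $M_{j1}M_{11}^{k}M_{1l}$ ($j,l\ge2$, $k\ge0$), and by the scalars $M_{ij}$ ($i,j\ge2$). It then suffices to intersect with the invariants of the torus $(\mathbb C^*)^p$: a monomial in these generators is torus-invariant exactly when the off-diagonal indices $\ge2$ that it carries are balanced, and every such monomial factors as a product of the $f_{i_1\cdots i_q}$ (a trace of a word in $M_{11}$ is $f_{1\cdots1}$, a scalar $M_{ii}$ is $f_{i}$, and a balanced collection of contractions and scalars assembles into cycles $1\to\cdots\to1$ each passing exactly once through the big block). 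Finite generation follows because a cyclic word revisiting an index splits into a product of two shorter ones, while Cayley--Hamilton bounds the powers of $M_{11}$ that are needed.

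The technical core is this second step, and within it the torus reduction: one must check carefully that taking $(\mathbb C^*)^p$-invariants of the ring produced by Procesi's theorem collapses precisely onto the cyclic words $f_{i_1\cdots i_q}$, with no further generators surviving. The ingredients of the first step --- that the symbol of $\mathbf{Tr}(E_{i_1i_2}\cdots E_{i_qi_1})$ is the corresponding trace polynomial modulo lower degree, and that lifting algebra generators of the associated graded yields algebra generators of the filtered algebra --- are routine, but should be stated rather than taken for granted.
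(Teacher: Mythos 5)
The paper does not contain a proof of this statement: it is quoted as Theorem~\ref{theoklink} directly from Klink and Ton-That \cite{Klink}, whose Journal of Algebra paper is where the argument actually lives, and the commutative counterpart is quoted separately as Theorem~\ref{invcom}. So there is nothing in the paper itself against which to compare your sketch. For what it is worth, your route --- pass to the associated graded via the PBW filtration, use exactness of $K$-invariants for reductive $K$ to identify $\operatorname{gr}\bigl(\mathcal U(\mathfrak g)^K\bigr)$ with $S(\mathfrak g)^K$, settle the polynomial invariant ring by the first fundamental theorem for $\operatorname{GL}_{d-p}$ acting on one matrix, $p$ vectors and $p$ covectors together with a torus, and then lift generators --- is the standard strategy and is in the spirit of what Klink and Ton-That do.

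Two places need tightening. First, under the trace-form identification $S(\mathfrak g)\cong\mathbb C[\mathfrak g]$ the generator $e_{ij}$ corresponds to the coordinate function $m_{ji}$, so the symbol of $\mathbf{Tr}(E_{i_1i_2}\cdots E_{i_qi_1})$ is the trace of the \emph{reversed} word $\Tr(M_{i_1i_q}\cdots M_{i_2i_1})$; this is harmless since the family of trace words is closed under reversal, but it should be said. Second, the phrase ``cycles $1\to\cdots\to1$ each passing exactly once through the big block'' is incorrect: a directed cycle on $\{2,\dots,p+1\}$ built from type-(b) arcs $M_{j1}M_{11}^kM_{1l}$ and type-(c) scalars passes through block $1$ once for \emph{each} type-(b) arc it uses, which can be several. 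What is true, and what you actually need, is that any such cycle, traversed in order and traced, equals one of the $f_{i_1\cdots i_q}$'s, the intermediate occurrences of the index $1$ and the powers of $M_{11}$ in the word accounting for the type-(b) arcs; you cannot split a cycle at the big block because $\Tr$ over $V_1$ does not factor. Finally, before invoking the filtered--graded lifting lemma for these \emph{specific} elements, you should also record that $\mathbf{Tr}(E_{i_1i_2}\cdots E_{i_qi_1})$ is itself $\Ad(K)$-invariant (by the same cyclicity computation as in the commutative step), not merely that its symbol is.
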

The two extreme cases  of the above theorem  give the following classical results. Actually for $p=0$,  it implies that the center of  $\mathcal{U}(\mathfrak{g})$ is generated by  Casimir operators (see \cite{zelo})
\[
{\bf Tr}(E^k),\quad k\in \N.
\]  
For $p=d-1$, we recover that the commutant of  $\{e_{ii},\, i=1,\dots,d\}$ in $\mathcal{U}(\mathfrak{g})$ is generated by  elements 
\[
e_{i_1i_2}\cdots e_{i_qi_1},\quad q\in \N, \, i_1,\dots,i_q\in\{1,\ldots,p+1\}.
\]
  For $a_1,\dots,a_n\in \mathcal{U}(\mathfrak{g})$, we denote 
  \[
  \langle a_1,\dots,a_n\rangle,\] 
  the subalgebra of $\mathcal{U}(\mathfrak{g})$ generated by the constants and elements $a_1,\dots,a_n$. Let us focus on the subalgebra $\mathcal{U}(\mathfrak{g})^{\text{GL}_{d-p}(\mathbb{C})\times \mathbb{C^*}^{p}}$ and its generators in the case when $p=1$ and $p=2$.
First we need the following lemmas.  
\begin{lem}\label{CommuteTrace} 
If $A=(a_{ij})_{1\leq i,j\leq d},B=(b_{ij})_{1\leq i,j\leq d}\in\mathcal{M}_d$, with $a_{ij}\in \mathcal{U}_n(\mathfrak{g})$, $b_{ij}\in\mathcal{U}_m(\mathfrak{g})$, for $i,j=1,\dots,d$,  then
\[
{\bf Tr}( AB)-{\bf Tr}(BA)\in \mathcal{U}_{m+n-1}(\mathfrak{g}).
\]
\end{lem}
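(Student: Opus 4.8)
The plan is to write out both traces explicitly in terms of matrix entries and compare term by term, using the commutation relations of $\mathcal{U}(\mathfrak g)$ to control the degree of the difference. First I would record that
\[
{\bf Tr}(AB)=\sum_{i,k=1}^d a_{ik}b_{ki},\qquad {\bf Tr}(BA)=\sum_{i,k=1}^d b_{ik}a_{ki}=\sum_{i,k=1}^d b_{ki}a_{ik},
\]
where in the last equality I just relabel the summation indices $i\leftrightarrow k$. Subtracting, we obtain
\[
{\bf Tr}(AB)-{\bf Tr}(BA)=\sum_{i,k=1}^d\bigl(a_{ik}b_{ki}-b_{ki}a_{ik}\bigr)=\sum_{i,k=1}^d[a_{ik},b_{ki}].
\]
So the statement reduces to showing that each commutator $[a_{ik},b_{ki}]$, with $a_{ik}\in\mathcal U_n(\mathfrak g)$ and $b_{ki}\in\mathcal U_m(\mathfrak g)$, lies in $\mathcal U_{m+n-1}(\mathfrak g)$.

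The key step is therefore the general fact that $[\mathcal U_n(\mathfrak g),\mathcal U_m(\mathfrak g)]\subset\mathcal U_{m+n-1}(\mathfrak g)$, i.e. that the commutator drops the total degree by at least one. This is the standard statement that the associated graded algebra $\operatorname{gr}\mathcal U(\mathfrak g)$ is commutative (a consequence of the Poincar\'e--Birkhoff--Witt theorem already invoked in Section~\ref{Universal enveloping algebra}). I would prove it by induction on $n+m$: for generators $e_{ij},e_{kl}$ (degree $1$) the defining relation $[e_{ij},e_{kl}]=\delta_{jk}e_{il}-\delta_{il}e_{kj}$ lies in $\mathcal U_1(\mathfrak g)=\mathcal U_{1+1-1}(\mathfrak g)$; then, writing a general element of $\mathcal U_n(\mathfrak g)$ as a linear combination of PBW monomials and using the Leibniz-type identities
\[
[ab,c]=a[b,c]+[a,c]b,\qquad [a,bc]=[a,b]c+b[a,c],
\]
one reduces the degree of the factors and concludes by the induction hypothesis, noting that each bracket of a generator with something of degree $r$ produces an element of degree $\le r$, so the product of the remaining factor (degree $\le n-1$ or $\le m-1$) with that bracket sits in $\mathcal U_{n+m-1}(\mathfrak g)$.

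Applying this with the roles of $n$ and $m$ as in the hypothesis, each summand $[a_{ik},b_{ki}]\in\mathcal U_{m+n-1}(\mathfrak g)$, and since $\mathcal U_{m+n-1}(\mathfrak g)$ is a linear subspace the finite sum over $i,k$ stays in it, giving ${\bf Tr}(AB)-{\bf Tr}(BA)\in\mathcal U_{m+n-1}(\mathfrak g)$. The only mildly delicate point is the bookkeeping in the induction — making sure that reducing a PBW monomial to lower-degree pieces via the Leibniz rule never inflates the degree — but this is exactly the content of PBW and requires no computation beyond what is sketched above. I expect this inductive degree estimate to be the main (and essentially the only) obstacle; everything else is relabelling of indices.
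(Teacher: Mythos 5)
Your proof is correct and takes the same route the paper merely gestures at: the paper's proof consists of the single sentence ``This is a consequence of the commutation relations in $\mathcal U(\mathfrak g)$,'' and you have simply supplied the details — the reindexing of the double sum to exhibit the difference of traces as $\sum_{i,k}[a_{ik},b_{ki}]$, and the standard PBW/filtered-algebra fact $[\mathcal U_n(\mathfrak g),\mathcal U_m(\mathfrak g)]\subset\mathcal U_{n+m-1}(\mathfrak g)$. Nothing is missing.
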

\begin{proof}
This is a consequence of the commutation relations in $\mathcal U(\mathfrak g)$. 
\end{proof}
The following lemma claims that the subset of invariants  $\mathcal{U}(\mathfrak{g})^{\text{GL}_{d-1}(\mathbb{C})\times \mathbb{C^*}}$ is generated by the Casimir elements associated to the Lie algebra $\operatorname{M}_{d}(\mathbb{C})$ and those associated to the subalgebra $\{M\in\operatorname{M}_d(\mathbb{C}): m_{id}=m_{di}=0, i=1,\dots,d\}\simeq \operatorname{M}_{d-1}(\mathbb{C})$. 
 \begin{lem}\label{ZdZd-1}
The subalgebra $\mathcal{U}(\mathfrak{g})^{\operatorname{GL}_{d-1}(\mathbb{C})\times \mathbb{C^*}}$ is generated by 
\[
{\bf Tr}(E_{11}^{k-1}),\, {\bf Tr} (E^k), \quad k=1,\dots,d.
\]
\end{lem}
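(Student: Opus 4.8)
The plan is to combine the Klink--Ton-That generation theorem (Theorem~\ref{theoklink}) in the case $p=1$ with an induction on degree, the guiding idea being that the only genuinely new invariants contributed by the two off‑diagonal blocks are the scalars $\psi_a:=E_{21}E_{11}^{a}E_{12}\in\mathcal U(\mathfrak g)$, and that these already lie in $\mathcal A:=\langle {\bf Tr}(E_{11}^{k}),{\bf Tr}(E^{k}):k\ge 0\rangle$. For $p=1$, writing $K=\operatorname{GL}_{d-1}(\C)\times\C^*$, Theorem~\ref{theoklink} says $\mathcal U(\mathfrak g)^K$ is generated by the constants and the elements ${\bf Tr}(E_{i_1i_2}\cdots E_{i_qi_1})$ with $q\ge1$, $i_k\in\{1,2\}$, where $E_{11}$ is the $(d-1)\times(d-1)$ top‑left block, $E_{12}$ a column, $E_{21}$ a row and $E_{22}=e_{dd}$ a scalar. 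First I would note that ${\bf Tr}(E^{k})$ and ${\bf Tr}(E_{11}^{k})$ are indeed $K$-invariant (for $g=\operatorname{diag}(k,k_1)\in K$ one has $\Ad(g)E_{11}=kE_{11}k^{-1}$ entrywise, and ${\bf Tr}(E^k)$ is central), and that $e_{dd}={\bf Tr}(E)-{\bf Tr}(E_{11})\in\mathcal A$; so it is enough to show every Klink--Ton-That generator lies in $\mathcal A$, which I would prove by induction on the degree $q$, freely using that every $K$-invariant of degree $<q$ is already in $\mathcal A$ (a $K$-invariant of degree $<q$ is a polynomial in Klink--Ton-That generators of degree $<q$, because taking $K$-invariants is compatible with the PBW filtration, $K$ being reductive).

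For the inductive step, take a generator ${\bf Tr}(E_{i_1i_2}\cdots E_{i_qi_1})$ of degree $q$. If the index pattern is constant, it is ${\bf Tr}(E_{11}^{q})$ or $e_{dd}^{q}$, hence in $\mathcal A$. Otherwise I would put it in normal form: by Lemma~\ref{CommuteTrace} one may cyclically permute the factors inside ${\bf Tr}(\cdot)$ at the cost of an element of $\mathcal U_{q-1}(\mathfrak g)^K\subset\mathcal A$; using $[e_{dd},e_{ij}]=0$ for $i,j\le d-1$ (so $e_{dd}$ commutes with $E_{11}$ and with $F:=E_{12}E_{21}$) together with the identity $E_{12}e_{dd}^{\,b}E_{21}=(e_{dd}+1)^bF$ (a consequence of $e_{id}e_{dd}=(e_{dd}+1)e_{id}$), one sweeps all the $E_{22}$-factors into a single scalar polynomial in $e_{dd}$ that commutes with everything in sight; and since $E_{12}$ is a column and $E_{21}$ a row, rotating each segment of the form $E_{21}E_{11}^{a}E_{12}$ to be contiguous collapses it to the genuine scalar $\psi_a$. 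Thus, modulo $\mathcal A$, every generator becomes a polynomial in $e_{dd}$ times a product $\psi_{a_1}\cdots\psi_{a_m}$; since $\mathcal A$ is an algebra and $e_{dd}\in\mathcal A$, the whole lemma reduces to the claim that $\psi_a\in\mathcal A$ for every $a\ge0$.

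To prove that claim I would induct on $a$ and isolate $\psi_a$ inside the Casimir ${\bf Tr}(E^{a+2})={\bf Tr}\big((E^{a+2})_{11}\big)+(E^{a+2})_{dd}$, where ${\bf Tr}\big((E^{a+2})_{11}\big)=\sum_{i=1}^{d-1}(E^{a+2})_{ii}$. The corner entry $(E^{a+2})_{dd}=\sum e_{dk_1}e_{k_1k_2}\cdots e_{k_{a+1}d}$ expands with no reordering at all: grouping the sum according to which of the $k_i$ equal $d$ writes it exactly as $\psi_a$ plus products of $e_{dd}$ with $\psi_j$'s, $j<a$. The term ${\bf Tr}\big((E^{a+2})_{11}\big)$ equals ${\bf Tr}(E_{11}^{a+2})$ plus the contribution of the block paths that visit the index $2$, and the normal‑form procedure of the previous paragraph rewrites this contribution as $(a+1)\psi_a$ plus products of $e_{dd}$, $\psi_j$ $(j<a)$, and lower‑degree invariants. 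Adding the two, ${\bf Tr}(E^{a+2})\equiv{\bf Tr}(E_{11}^{a+2})+(a+2)\,\psi_a$ modulo terms that lie in $\mathcal A$ by the inductions; dividing by $a+2\in\C^\times$ gives $\psi_a\in\mathcal A$. Finally, to get the sharp statement (only the generators ${\bf Tr}(E_{11}^{k-1})$, ${\bf Tr}(E^{k})$ with $k=1,\dots,d$) I would invoke the classical fact used already for $p=0$, namely that $Z(\mathcal U(\mathfrak{gl}_d))$, resp.\ $Z(\mathcal U(\mathfrak{gl}_{d-1}))$, is freely generated by ${\bf Tr}(E),\dots,{\bf Tr}(E^d)$, resp.\ ${\bf Tr}(E_{11}),\dots,{\bf Tr}(E_{11}^{d-1})$, so that higher powers are redundant.

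The step I expect to be the main obstacle is the degree bookkeeping in the normal‑form reduction, and concretely the fact that ${\bf Tr}\big((E^{n})_{11}\big)$ does \emph{not} factor cleanly along block paths the way $(E^{n})_{dd}$ does: the monomials $e_{j_1j_2}\cdots e_{j_nj_1}$ occurring in ${\bf Tr}(E^n)$ cannot be cyclically permuted on the nose, so the commutator corrections of Lemma~\ref{CommuteTrace} must be tracked through every rotation and checked, each time, to be $K$-invariant of strictly smaller degree. Keeping these corrections under control --- and arranging the two nested inductions (on $q$, and on $a$ for the $\psi_a$) so that the argument is visibly non‑circular --- is where the real work lies.
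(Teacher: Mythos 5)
Your proposal follows essentially the same route as the paper: reduce to showing the Klink–Ton-That generators ${\bf Tr}(E_{i_1i_2}\cdots E_{i_qi_1})$ lie in $\mathcal A=\langle {\bf Tr}(E_{11}^k),{\bf Tr}(E^k)\rangle$, induct on the PBW degree using Lemma~\ref{CommuteTrace} to cycle traces modulo lower-degree invariants (which the filtration argument puts in $\mathcal A$), isolate the ``one off-diagonal block'' generator $\psi_a=E_{21}E_{11}^aE_{12}$ from the expansion of ${\bf Tr}(E^{a+2})$, and then cut the degree down to $d$ (resp.\ $d-1$) using the classical free generation of the two centers. The only real difference is presentational: you introduce $\psi_a$ explicitly and run a secondary induction on $a$ with an explicit normal form (sweeping the $E_{22}$-factors and invoking $e_{id}e_{dd}=(e_{dd}+1)e_{id}$), while the paper does a more ad hoc case split on the number of $E_{11}$-factors; and you obtain the correct coefficient $a+2=q$ in front of $\psi_a$, whereas the paper's displayed count of ``three'' terms with exactly $q-2$ copies of $E_{11}$ undercounts the $q$ cyclic placements of the block $E_{12}E_{21}$ (the terms with $E_{11}$ on both sides of $E_{12}E_{21}$ are missing from its list) — a harmless slip since any nonzero coefficient suffices, but your bookkeeping is cleaner on this point.
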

\begin{proof}
For $q\in \N^*$, let $\mathcal{T}_q$ be the subalgebra 
\[
\langle {\bf Tr}(E_{i_1i_2}\cdots E_{i_ki_1}),\, k\in\{1,\dots q\},\, i_1,\dots,i_k=1,2\rangle.
\]
 It is sufficient to prove that for every $q\in \N^*$ \begin{align}\label{Hq} \mathcal{T}_q=\langle {\bf Tr}(E_{11}^{k}), {\bf Tr}(E^{k}),\,k\in\{1,\dots, q\}\rangle.\end{align}  For every $q\in \N^*$ the inclusion 
 \[
 \langle {\bf Tr}(E_{11}^{k}), {\bf Tr}(E^{k}),\,k\in\{1,\dots, q\}\rangle\subset \mathcal{T}_q
 \]
   follows from the fact that 
   \[{\bf Tr}(E^k)=\sum {\bf Tr}(E_{i_1i_2}\cdots E_{i_ki_1}),\]
  where the sum runs over all sequences $i_1,\dots,i_k$ of integers in $\{1,2\}$. Let us prove the reverse inclusion by induction  on $q$. It is clearly true for $q=1$. For $q=2$, let us write 
\[
{\bf Tr}(E^2)={\bf Tr}(E_{21}E_{12})+{\bf Tr}(E_{12}E_{21})+{\bf Tr}(E_{11}^2)+{\bf Tr}(E_{22}^2).
\]
Thus the inclusion 
\[
\mathcal{T}_2 \subset\langle {\bf Tr}(E_{11}^{k}), {\bf Tr}(E^k),\,k=1,2\rangle
\]
 follows from Lemma \ref{CommuteTrace} which implies that 
 \[
 {\bf Tr}(E_{21}E_{12})-{\bf Tr}(E_{12}E_{21})\in \mathcal{U}^{\text{GL}_{d-1}\times \mathbb{C^*}}_1(\mathfrak{g})\subset\langle1,E_{11},E_{22}\rangle.
 \]
The case $q=3$ is proved in a similar way. 
Suppose that (\ref{Hq}) is true for $q-1$, for a fixed $q\ge 4$. Let $i_1,i_2\dots,i_q,$ be a sequence of integers in $\{1,2\}$.
If  the sequence $i_1,i_2\dots,i_q,$ contains no successive integers equal to $1$ then $E_{i_1i_2}\cdots E_{i_qi_1},$ contains only factors equal to  $E_{21}E_{12}$, $E_{12}E_{21}$, or $E_{22}$. By lemma \ref{CommuteTrace} and inclusion
\begin{align}\label{inclusion}
\mathcal{U}_{q-1}(\mathfrak{g})^{\text{GL}_{d-1}(\C)\times \mathbb{C^*}}\subset \mathcal{T}_{q-1},
\end{align}
we can suppose that $E_{i_1i_2}\cdots E_{i_qi_1},$ contains only factors equal to  $E_{21}E_{12}$, or $E_{22}$, which belongs to the subalgebra 
\[
\langle {\bf Tr}(E_{11}^{k}), {\bf Tr}(E^k), k=1, 2\rangle.
\]
If $E_{i_1i_2}\cdots E_{i_qi_1},$ contains factors equal to $E_{11}$ but strictly less than $q-2$, then it contains at least one factor equal to $E_{21}E_{11}E_{12}$, $E_{11}E_{12}E_{21}$ or $E_{12}E_{21}E_{11}$. Thanks to lemma \ref{CommuteTrace}, and inclusion (\ref{inclusion}) we can suppose that $i_1=i_3=2$ and $i_2=1$. Thus
\[
{\bf Tr }(E_{i_1i_2}\cdots E_{i_qi_1})=(E_{21}E_{11}E_{12}){\bf Tr}(E_{2i_4}\cdots E_{i_q2}).
\]
Then the induction hypothesis implies 
\[
{\bf Tr }(E_{i_1i_2}\cdots E_{i_qi_1})\in \langle  {\bf Tr}(E_{11}^{k}), {\bf Tr}(E^k),k\in\{1,\dots,q-1\}\rangle.
\]
If $E_{i_1i_2}\cdots E_{i_qi_1}=E^d_{11}$ then  
\[
{\bf Tr }(E_{i_1i_2}\cdots E_{i_qi_1})={\bf Tr }(E_{11}^{q})\in \langle  {\bf Tr}(E_{11}^{k}), {\bf Tr}(E^k),k\in\{1,\dots, q\}\rangle.
\]
If $E_{i_1i_2}\cdots E_{i_qi_1}$ contains $q-2$ factors equal to $E_{11}$, then
\[
{\bf Tr }(E_{i_1i_2}\cdots E_{i_qi_1})\in \{{\bf Tr}(E_{11}^{q-2}E_{12}E_{21}),{\bf Tr}(E_{21}E_{11}^{q-2}E_{12}),{\bf Tr}(E_{12}E_{21}E_{11}^{q-2})\}.
\]
We write
\begin{align*} 
{\bf Tr}(E^q)={\bf Tr}(E_{11}^{q})+{\bf Tr}(E_{11}^{q-2}&E_{12}E_{21})+{\bf Tr}(E_{21}E_{11}^{q-2}E_{12})+{\bf Tr}(E_{12}E_{21}E_{11}^{q-2})\\
&+\sum{\bf Tr }(E_{i_1i_2}\cdots E_{i_qi_1})
\end{align*}
where the sum runs over all sequences $i_1,\dots,i_q$ of integers in $\{1,2\}$ containing strictly less than $q-1$ integers equal to $1$. The previous cases, Lemma \ref{CommuteTrace} and inclusion (\ref{inclusion}) imply that
\[
{\bf Tr}(E^q)-3{\bf Tr }(E_{i_1i_2}\cdots E_{i_qi_1})\in \langle  {\bf Tr}(E_{11}^{k}), {\bf Tr}(E^k),k\in\{1,\dots, q\}\rangle.
\]
Since it is known (see \cite{zelo}) that
\[
\langle{\bf Tr}(E^k),k\in\{1,\dots, d\}\rangle=
\langle{\bf Tr}(E^k),k\ge 1\rangle,
\] and 
\[
\langle{\bf Tr}(E_{11}^k),k\in\{1,\dots, d-1\}\rangle=
\langle{\bf Tr}(E_{11}^k),k\ge 1\rangle,
\]
the proposition follows.
\end{proof}
\begin{rem} \label{p>2}
When $p\ge 2$, the subalgebra  $\mathcal{U}(\mathfrak{g})^{\text{GL}_{d-p}(\mathbb{C})\times  \mathbb{C^*}^{p}}$ is not generated by the Casimir elements associated to the Lie algebras  $\{M\in\operatorname{M}_d(\mathbb{C}): m_{ij}=m_{ji}=0, i=1,\dots,d,j=d-k+1,\dots,d\}\simeq \operatorname{M}_{d-k}(\mathbb{C})$, $k\in\{1,\dots,p\}$. For instance, 
\[
{\bf Tr}(E_{13}E_{31}) \notin \langle {\bf Tr}(E_{11}^k),{\bf Tr} (
\left[
\begin{array}{cc}
E_{11}  &   E_{12}   \\
 E_{21} & E_{22}        
\end{array}
\right]^k), {\bf Tr}(E^k),\,k\in \N\rangle
\] and thus
\[
\langle {\bf Tr}(E_{11}^k),{\bf Tr} (
\left[
\begin{array}{cc}
E_{11}  &   E_{12}   \\
 E_{21} & E_{22}        
\end{array}
\right]^k), {\bf Tr}(E^k),\,k\in \N\rangle\subsetneq \mathcal{U}(\mathfrak{g})^{\operatorname{GL}_{d-2}(\C)\times\mathbb{C^*}^2}.
\]
\end{rem} 
The following theorem is a quantum analogue of    theorems 2.2  of \cite{Nordenstam}.
\begin{thm} \label{Restriction for p=1} The restriction of the $j_n$'s to the subalgebra \[\langle {\bf Tr}(E_{11}^{k-1}), {\bf Tr}(E^{k}),\,k\in\{1,\dots, d\}\rangle,\] defines a quantum Markov process.
\end{thm}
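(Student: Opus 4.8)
The plan is to recognize the subalgebra $\mathcal A:=\langle {\bf Tr}(E_{11}^{k-1}), {\bf Tr}(E^{k}),\,k\in\{1,\dots, d\}\rangle$ as the algebra of invariants $\mathcal U(\mathfrak g)^K$ attached to the block subgroup $K=\operatorname{GL}_{d-1}(\mathbb C)\times\mathbb C^*$, namely the case $p=1$, and then to invoke verbatim the general restriction principle already established in Section \ref{Restriction to a subalgebra}.

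First I would quote Lemma \ref{ZdZd-1}, which gives precisely the identification $\mathcal A=\mathcal U(\mathfrak g)^{\operatorname{GL}_{d-1}(\mathbb C)\times\mathbb C^*}$; note that ${\bf Tr}(E_{11}^{0})$ is merely the constant $d-1$, so the family displayed in the statement is literally the generating family of the lemma. Next, Proposition \ref{Commutation} shows that $P$ commutes with $\Ad(g)$ for every $g\in K$, so that Proposition \ref{UKstable}, applied with this $K$, yields that $\mathcal U(\mathfrak g)^K$ is stable under $P$, that is, $P\mathcal A\subset\mathcal A$.

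It then only remains to transcribe the quantum Markov property to the restricted setting. For $x\in\mathcal A$ and any $\xi$ in the von Neumann algebra generated by $\{j_k(\mathcal A),\,k\le n-1\}$ — which sits inside the von Neumann algebra generated by $\{j_k(\mathcal U(\mathfrak g)),\,k\le n-1\}$ — the quantum Markov chain property of $(j_n)_{n\ge1}$ gives $\omega(j_n(x)\xi)=\omega(j_{n-1}(Px)\xi)$, and $j_{n-1}(Px)\in j_{n-1}(\mathcal A)$ because $Px\in\mathcal A$. Since moreover $P|_{\mathcal A}$ is again a unital completely positive map, being the restriction of such a map to a $P$-stable subalgebra containing the unit, this says exactly that $(j_n|_{\mathcal A})_{n\ge1}$ is a quantum Markov chain with transition operator $P|_{\mathcal A}$, which is the assertion.

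I expect no genuine obstacle at this stage: the whole weight of the theorem lies in Lemma \ref{ZdZd-1}, where one has to cut down the a priori very large generating set of Theorem \ref{theoklink} (the Klink and Ton-That invariants) to the finite family $\{{\bf Tr}(E_{11}^{k-1}),{\bf Tr}(E^{k})\}$ by means of the degree estimate of Lemma \ref{CommuteTrace} and an induction on word length. One may additionally remark, although it is not needed for the statement, that under the morphisms $j_n=\rho^{\otimes n}$ the commutative pieces $\langle{\bf Tr}(E^{k})\rangle$ and $\langle{\bf Tr}(E_{11}^{k})\rangle$ encode respectively the spectrum of the full matrix and of its top $(d-1)\times(d-1)$ minor, so that the theorem is indeed the noncommutative counterpart of the Markovianity of two consecutive minors established in \cite{Nordenstam}.
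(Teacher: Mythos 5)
Your proposal is correct and follows exactly the paper's own route: identify the subalgebra with $\mathcal U(\mathfrak g)^{\operatorname{GL}_{d-1}(\mathbb C)\times\mathbb C^*}$ via Lemma \ref{ZdZd-1}, then apply Proposition \ref{UKstable} to get $P$-stability and hence the restricted quantum Markov chain. The paper states this in one line; you have merely spelled out the intermediate bookkeeping (inclusion of von Neumann algebras, $P|_{\mathcal A}$ remaining unital completely positive), which the paper leaves implicit.
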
 
\begin{proof} Theorem follows immediately from  proposition \ref{UKstable} and lemma \ref{ZdZd-1}.\end{proof}
Note that the subalgebra 
\[
\langle {\bf Tr}(E_{11}^{k-1}), {\bf Tr}(E^{k}),\,k\in\{1,\dots, d\}\rangle,
\] 
is commutative. Thus, as in \cite{bianemeyer} which focus on the $d=2$ case, the quantum Markov process in the above theorem is a noncommutative process, with a commutative Markovian operator. Taking $d=2$ in   theorem \ref{Restriction for p=1}   the Markovianity of the process (\ref{d=2noncom}) follows.  The following theorem is an analogue of    theorem 2.4  of \cite{Nordenstam} in a noncommutative context. The non-Markovianity comes from  remark \ref{p>2}.
\begin{thm}  \label{Restriction for p=2} The restriction of the $j_n$'s to the subalgebra \[\langle {\bf Tr}(E_{11}^k),{\bf Tr} (
\left[
\begin{array}{cc}
E_{11}  &   E_{12}   \\
 E_{21} & E_{22}        
\end{array}
\right]^k), {\bf Tr}(E^k),\,k\in\N\rangle,\] does not define a quantum Markov process.
\end{thm}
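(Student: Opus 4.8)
The plan is to show that the subalgebra $\mathcal B:=\langle {\bf Tr}(E_{11}^k),{\bf Tr}(F^k),{\bf Tr}(E^k),\,k\in\N\rangle$, where $F=\left[\begin{array}{cc}E_{11}&E_{12}\\E_{21}&E_{22}\end{array}\right]$ is the top‑left $(d-1)\times(d-1)$ block of $E$, is \emph{not} stable under $P$; since stability under $P$ is precisely what makes the restriction of $(j_n)$ to a subalgebra a quantum Markov chain (Proposition \ref{UKstable} and the paragraph preceding it), and since the transition operator of any Markovian refinement of the restricted chain must be the compression of $P$, this is the natural route, the obstruction being supplied by Remark \ref{p>2}. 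Write $C^{(m)}_k={\bf Tr}\big((\text{top-left }m\times m\text{ block of }E)^k\big)$, so that $\mathcal B=\langle C^{(d-2)}_k,C^{(d-1)}_k,C^{(d)}_k\rangle$, and recall from Remark \ref{p>2} that $\mathcal B$ is a proper subalgebra of $\mathcal U(\mathfrak g)^{\operatorname{GL}_{d-2}(\C)\times\mathbb{C^*}^{2}}$, with ${\bf Tr}(E_{13}E_{31})\notin\mathcal B$.

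I would first record the elementary facts about $\mathcal B$ needed below. From $e_{d-1,d-1}=C^{(d-1)}_1-C^{(d-2)}_1$ and $e_{d,d}=C^{(d)}_1-C^{(d-1)}_1$ one sees that every degree-$\le1$ element of $\mathcal U(\mathfrak g)^{\operatorname{GL}_{d-2}\times\mathbb{C^*}^{2}}$ lies in $\mathcal B$ (by Theorem \ref{theoklink} and a short dimension count this space is spanned by $1,C^{(d-2)}_1,e_{d-1,d-1},e_{d,d}$). Using only the commutation relations one checks that ${\bf Tr}(E_{12}E_{21})-{\bf Tr}(E_{21}E_{12})$ and ${\bf Tr}(E_{13}E_{31})-{\bf Tr}(E_{31}E_{13})$ are such degree-$\le1$ invariants; combined with ${\bf Tr}(E_{12}E_{21})+{\bf Tr}(E_{21}E_{12})=C^{(d-1)}_2-C^{(d-2)}_2-e_{d-1,d-1}^{2}\in\mathcal B$, this yields ${\bf Tr}(E_{12}E_{21})\in\mathcal B$ and ${\bf Tr}(E_{31}E_{13})\equiv{\bf Tr}(E_{13}E_{31})\bmod\mathcal B$. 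Consequently $\sum_{a\le d-2,\,c\le d}e_{ac}e_{ca}=C^{(d-2)}_2+{\bf Tr}(E_{12}E_{21})+{\bf Tr}(E_{13}E_{31})\equiv{\bf Tr}(E_{13}E_{31})\bmod\mathcal B$, and likewise $\sum_{a\le d,\,c\le d-2}e_{ac}e_{ca}\equiv{\bf Tr}(E_{13}E_{31})\bmod\mathcal B$.

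The heart of the proof is the computation that $x:={\bf Tr}(E_{11})\cdot{\bf Tr}(E^{3})=C^{(d-2)}_1\,C^{(d)}_3\in\mathcal B$ satisfies $Px\notin\mathcal B$. Since $P=(\id\otimes\eta)\circ\Delta$ with $\eta=\tr\circ\rho$ and $\Delta$ is multiplicative, $Px=(\id\otimes\eta)\big(\Delta(C^{(d-2)}_1)\,\Delta(C^{(d)}_3)\big)$, and I would expand $\Delta(C^{(d)}_3)=\sum_{a,b,c\le d}\sum_{S\subseteq\{1,2,3\}}L^{(abc)}_S\otimes R^{(abc)}_S$ over the eight subsets $S$ of the three factors. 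The contribution of the summand $C^{(d-2)}_1\otimes I$ of $\Delta(C^{(d-2)}_1)$ equals $C^{(d-2)}_1\cdot P(C^{(d)}_3)$, which is in $\mathcal B$ because $P$ preserves the center $\mathcal U(\mathfrak g)^{G}\subseteq\mathcal B$ (Proposition \ref{Commutation}). For the contribution of $I\otimes C^{(d-2)}_1$ one evaluates $\eta\big(C^{(d-2)}_1 R^{(abc)}_S\big)=\tfrac1d\Tr\big(\mathrm{diag}(1,\dots,1,0,0)\cdot\mathrm{mat}(R^{(abc)}_S)\big)$: the subsets $S$ with $|S|\ge2$ give only multiples of $C^{(d-2)}_1$, $C^{(d)}_1$ and a constant, all in $\mathcal B$, while each of $S=\{1\},\{2\},\{3\}$ gives, after collapsing the internal matrix products, a term equal modulo $\mathcal B$ to $\tfrac1d\,{\bf Tr}(E_{13}E_{31})$ by the identities of the previous paragraph. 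Hence $Px\equiv\tfrac3d\,{\bf Tr}(E_{13}E_{31})\bmod\mathcal B$, so $Px\notin\mathcal B$ by Remark \ref{p>2}.

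If one insists on ruling out every possible transition operator: suppose the restriction of $(j_n)$ to $\mathcal B$ were a quantum Markov chain with (necessarily unital completely positive) operator $Q\colon\mathcal B\to\mathcal B$. Taking $\xi=j_{n-1}(y)$, $y\in\mathcal B$, in the defining identity and subtracting the Markov property of $(j_n)$ from Section \ref{Quantum Markov chain} gives $\tau_{n-1}\big((Px-Qx)\,y\big)=0$ for all $y\in\mathcal B$ and all $n\ge2$, where $\tau_n:=\omega\circ j_n=\omega\circ\rho^{\otimes n}$ is a trace on $\mathcal U(\mathfrak g)$; here $Px-Qx$ is a \emph{fixed} element of $\mathcal U(\mathfrak g)^{\operatorname{GL}_{d-2}\times\mathbb{C^*}^{2}}$ (as $Px$ lies there by Proposition \ref{Commutation} and $Qx\in\mathcal B$) which is not in $\mathcal B$ — contradicting that the pairings $(z,y)\mapsto\tau_n(zy)$, $y\in\mathcal B$, separate $\mathcal U(\mathfrak g)^{\operatorname{GL}_{d-2}\times\mathbb{C^*}^{2}}$ modulo $\mathcal B$ once $n$ is large enough for $\rho^{\otimes n}$ to detect the relevant graded piece. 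I expect the main obstacle to be the bookkeeping in the key computation — verifying that the three cross‑contractions producing ${\bf Tr}(E_{13}E_{31})$ do not cancel while every other contraction lands back in $\mathcal B$ — together with making precise the separation statement used in this last step; both are elementary but require care.
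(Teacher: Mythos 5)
Your proposal follows the paper's overall strategy (exhibit an element of $\mathcal B$ whose image under $P$ leaves $\mathcal B$) but uses a genuinely different, and arguably more tractable, witness. The paper takes $a=E_{21}E_{12}(E_{31}E_{13}+E_{32}E_{23})^2$, a degree-$6$ element, and simply asserts by ``straightforward calculation'' that $a\in\mathcal B$ while $Pa\notin\mathcal B$. You instead take the degree-$4$ product $x={\bf Tr}(E_{11})\,{\bf Tr}(E^{3})$, whose membership in $\mathcal B$ is immediate, and carry out the Sweedler expansion explicitly. I checked the key steps: the summand with $C^{(d-2)}_1$ in the first tensor slot gives $C^{(d-2)}_1\,P(C^{(d)}_3)\in\mathcal B$ since $P$ preserves the centre $\mathcal U(\mathfrak g)^G\subset\mathcal B$; the subsets $S$ with $|S|\ne 1$ collapse to multiples of $1$, $C^{(d-2)}_1$, ${\bf Tr}(E)$ and $C^{(d)}_3$, all in $\mathcal B$; and each of $S=\{1\},\{2\},\{3\}$ contributes $\tfrac1d$ times one of $\sum_{a\le d-2,c\le d}e_{ac}e_{ca}=C^{(d-2)}_2+{\bf Tr}(E_{12}E_{21})+{\bf Tr}(E_{13}E_{31})$ or $\sum_{a\le d,c\le d-2}e_{ac}e_{ca}=C^{(d-2)}_2+{\bf Tr}(E_{21}E_{12})+{\bf Tr}(E_{31}E_{13})$. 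Your degree-$\le1$ invariant bookkeeping (showing ${\bf Tr}(E_{12}E_{21}),{\bf Tr}(E_{21}E_{12})\in\mathcal B$ and ${\bf Tr}(E_{13}E_{31})\equiv{\bf Tr}(E_{31}E_{13})$ modulo $\mathcal B$) is correct, so indeed $Px-\tfrac{3}{d}{\bf Tr}(E_{13}E_{31})\in\mathcal B$, and Remark \ref{p>2} finishes the job; no cancellation occurs since all three singleton contributions come with the same sign. The bonus you gain is an explicit computation rather than an appeal to the reader's patience. The final paragraph, in which you try to rule out \emph{every} transition operator $Q\colon\mathcal B\to\mathcal B$ rather than just the restriction of $P$, addresses a logical point the paper itself glosses over (``not $P$-stable'' is taken there as synonymous with ``not a quantum Markov chain''); your reduction to a separation property of the pairings $(z,y)\mapsto\omega(j_n(zy))$ is a reasonable sketch, and you correctly flag that making it precise requires further work — this is a sharpening, not a gap relative to the paper's own standard.
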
 
\begin{proof}
We have to prove that the subalgebra 
\[
\mathcal B:=
\langle {\bf Tr}(E_{11}^k),{\bf Tr} (
\left[
\begin{array}{cc}
E_{11}  &   E_{12}   \\
 E_{21} & E_{22}        
\end{array}
\right]^k), {\bf Tr}(E^k),\,k\in\N\rangle,
\] is not stable by the operator $P$. Indeed,  the partition of $E$ for $p=2$ writes
 \[
 E=\left[
 \begin{array}{ccc}
 E_{11} &  E_{12} &  E_{13}\\
  E_{21} &  E_{22} &  E_{23}\\
   E_{31} &  E_{32} &  E_{33}
 \end{array}
 \right].
 \]
One can prove by straightforward calculation that the element
 \[
 a=
 E_{21}E_{12}\left(E_{31}E_{13}+E_{32}E_{23}\right)^2
 \] 
 is in $\mathcal B$, but $Pa$ does not, which proves the theorem.
\end{proof}
Let us choose an integer $m$ large enough such that the subalgebras
\[
\langle {\bf Tr}(E_{i_1i_2}\cdots E_{i_qi_1}),\quad q\in \N^*, \, i_1,\dots,i_q\in\{1,\dots,p+1\}\rangle
\] 
and 
\[
\langle {\bf Tr}(E_{i_1i_2}\cdots E_{i_qi_1}),\quad q=1,\dots,m, \, i_1,\dots,i_q\in\{1,\dots,p+1\}\rangle
\]
are equal. 
In the framework of this paper, the natural process which "contains" the one of theorem \ref{Restriction for p=2} and remains Markovian, is given in the following theorem taking $p=2$. 
\begin{thm}The restriction of the $j_n$'s to the subalgebra 
\[
\langle{\bf Tr}(E_{i_1i_2}\cdots E_{i_qi_1}),\quad q\in\{1,\dots,m\}, \, i_1,\dots,i_q\in\{1,\dots,p+1\}\rangle,
\] 
defines a quantum Markov process.
\end{thm}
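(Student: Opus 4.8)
The plan is to recognise the subalgebra in the statement as the \emph{entire} algebra of invariants $\mathcal{U}(\mathfrak{g})^{K}$ for $K=\operatorname{GL}_{d-p}(\mathbb{C})\times\mathbb{C^*}^{p}$, and then to apply Proposition \ref{UKstable} verbatim. First I would make precise the choice of the integer $m$ fixed just before the statement. By Theorem \ref{theoklink}, $\mathcal{U}(\mathfrak{g})^{K}$ is finitely generated, and it is generated by the constants together with the family $\{{\bf Tr}(E_{i_1i_2}\cdots E_{i_qi_1}):q\in\N^*,\ i_1,\dots,i_q\in\{1,\dots,p+1\}\}$. Hence a finite subfamily already generates; since such a subfamily involves only finitely many values of $q$, taking $m$ to be the largest value occurring shows that the algebra $\langle{\bf Tr}(E_{i_1i_2}\cdots E_{i_qi_1}):q\in\N^*\rangle$ and the algebra $\langle{\bf Tr}(E_{i_1i_2}\cdots E_{i_qi_1}):q\in\{1,\dots,m\}\rangle$ coincide. (Equivalently, finite generation is a general fact: $K$ is reductive and acts on the filtered algebra $\mathcal{U}(\mathfrak{g})$ compatibly with the filtration whose associated graded is the symmetric algebra $S(\mathfrak{g})$, on which invariants are finitely generated by Hilbert's theorem, and one lifts homogeneous generators.) This is exactly the $m$ used in the statement.

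Then, combining this equality of algebras with Theorem \ref{theoklink}, the subalgebra appearing in the theorem is precisely $\mathcal{U}(\mathfrak{g})^{K}$. By Proposition \ref{Commutation} the operator $P$ commutes with the adjoint action, so $\mathcal{U}(\mathfrak{g})^{K}$ is stable under $P$ by Proposition \ref{UKstable}; consequently the restriction of the quantum Markov chain $(j_n)_{n\ge1}$ to this subalgebra still satisfies the Markov property $\omega(j_n(x)\xi)=\omega(j_{n-1}(Px)\xi)$, now with both $x$ and $Px$ lying in the subalgebra and $\xi$ in the von Neumann algebra generated by its images under $j_1,\dots,j_{n-1}$. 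This is the assertion to be proved, so the theorem follows at once; taking $p=2$ gives the process announced just before the statement as the Markovian one ``containing'' the non-Markovian process of Theorem \ref{Restriction for p=2}.

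The only point that genuinely needs care is the existence of $m$, i.e.\ the finite generation of $\mathcal{U}(\mathfrak{g})^{K}$, and this is the sole obstacle; it is already built into the statement of Theorem \ref{theoklink}. Beyond that there is nothing to do: in contrast with Theorems \ref{Restriction for p=1} and \ref{Restriction for p=2}, one does not have to compare $\mathcal{U}(\mathfrak{g})^{K}$ with a smaller, more explicit algebra of Casimir elements of sub-Lie-algebras, so neither the combinatorial manipulation of the relations of $\mathcal{U}(\mathfrak{g})$ used in Lemma \ref{ZdZd-1}, nor the explicit computation of $Pa$ used for Theorem \ref{Restriction for p=2}, is needed here.
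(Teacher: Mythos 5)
Your proof is correct and takes essentially the same route as the paper: identify the subalgebra with $\mathcal{U}(\mathfrak{g})^{K}$ for $K=\operatorname{GL}_{d-p}(\mathbb{C})\times\mathbb{C^*}^{p}$ via Theorem \ref{theoklink} (the choice of $m$ being exactly what makes the finite list of generators suffice), and then invoke Proposition \ref{UKstable} to get stability under $P$ and hence the Markov property. Your extra remark justifying the existence of $m$ via reductivity and Hilbert's finiteness theorem is a nice gloss but is already subsumed in Theorem \ref{theoklink}, which is what the paper cites.
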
 
\begin{proof}Theorem follows from theorem \ref{theoklink} and proposition \ref{UKstable}. \end{proof}
 \section{Random matrices}\label{Applications to random matrices}
 Let  $\operatorname{H}_d$ and $\operatorname{H}^0_d$ be respectively the set of $d\times d$ complex  Hermitian matrices and the set of $d\times d$ complex  Hermitian matrices with null trace, both endowed with the scalar product given by \[\langle M,N\rangle= \Tr(MN), \quad M,N\in  \textrm{ $\operatorname{H}_d$ (resp. $\operatorname{H}^0_d)$.}\] 

 For $k,l\in \N^*$, we denote $\operatorname{M}_{k,l}(\C)$ the set of $k\times l$ complex matrices and let $\operatorname{M}_{k}(\C)=\operatorname{M}_{k,k}(\C)$. As in the noncommutative case, we partition a matrix  $M\in \operatorname{M}_d(\C)$ in block matrices in the form
 \[
 M=  \left[\begin{array}{ccc}
    M_{11} & \dots & M_{1p+1} \\ 
    \vdots &  & \vdots \\ 
    M_{p+11} & \dots & M_{p+1p+1} \\ 
  \end{array}\right],
  \]
 where  $M_{11}\in \operatorname{M}_{d-p,d-p}(\C)$, $M_{1i}\in \operatorname{M}_{1,d-p}(\C)$, $M_{i1}\in \operatorname{M}_{d-p,1}(\C)$, $i\in\{2,\dots, p+1\}$, and 
 $M_{ij}\in \C$, $i,j\in\{ 2,\dots,d\}$.

Define the elements $(x_{ij})_{1\leq i,j\leq d}$ of $\mathcal U(\mathfrak g)$ by
\[
x_{ij}=e_{ij}, \ \text{ for $1\leq i\not= j\leq d$,} \ \text{ and }
x_{ii}=e_{ii}-\frac{1}{d}\, I, \ \text{ for $1\leq i\leq d$}.
\]
Note all the $x_{ij}$'s are traceless elements of $\mathfrak g$. Let $v=\frac{d}{d-1}\tr(\rho(x_{ii})\rho(x_{ii}))$ which does not depend on $i$. Then we have the following theorem which is due to Biane.
  \begin{thm}[Biane, \cite{bianepermutation}] The law of the family of random variables on $(\mathcal{W},\omega)$
  \[
  \left(\frac{1}{\sqrt{nv}}j_{\lfloor nt\rfloor}(x_{ij})\right)_{t\in \R_+,1\le i,j\le d}
  \] 
  converges as $n$ goes to infinity towards the law of 
  \[
  (m_{jk}(t))_{t\in \R_+,1\le i,j\le d},
  \]
  where $(M(t)=(m_{ij}(t))_{1\leq i,j\leq d},t\ge 0)$ is a standard  Brownian motion on $\operatorname{H}_d^0$.
 \end{thm}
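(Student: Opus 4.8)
The plan is to realise each $j_n(x_{ij})$ as a sum of $n$ tensor-independent, identically distributed increments and then to run a noncommutative central limit theorem; the mechanism that makes the limiting family \emph{commutative} — a genuine Gaussian family of entries of a Hermitian matrix, rather than a free/semicircular family — is the traciality of the normalized trace $\tr$.

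First I would observe that each $x_{ij}$ is a primitive element of $\mathcal U(\mathfrak g)$, i.e. $\Delta(x_{ij})=I\otimes x_{ij}+x_{ij}\otimes I$: for $i\ne j$ this is the definition of $\Delta$ on the generator $e_{ij}$, and for $i=j$ it follows because $x_{ii}=e_{ii}-\tfrac1d I$ is a traceless diagonal matrix, hence a linear combination of the primitive elements $h_1,\dots,h_{d-1}$. An immediate induction on $n$ using $\rho^{\otimes n}(x)=(\rho^{\otimes n-1}\otimes\rho)\Delta(x)$ and $\rho^{\otimes n}(I)=I^{\otimes n}$ then gives
\[
j_n(x_{ij})=\sum_{k=1}^{n}\rho(x_{ij})^{(k)},\qquad \rho(x_{ij})^{(k)}:=I^{\otimes(k-1)}\otimes\rho(x_{ij}).
\]
Thus, for each fixed $k$, the family $(\rho(x_{ij})^{(k)})_{i,j}$ is a copy — living in the $k$-th tensor factor of $\mathcal W$ — of $(\rho(x_{ij}))_{i,j}$ in $(\operatorname{M}_d(\C),\tr)$, and these copies are tensor-independent for distinct values of $k$, since $\omega=\tr^{\otimes\infty}$ is the product state.

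Next I would compute joint moments. Expanding the sums gives, for any $m\ge1$ and any indices,
\[
\omega\big(j_n(x_{i_1j_1})\cdots j_n(x_{i_mj_m})\big)=\sum_{k_1,\dots,k_m=1}^{n}\ \prod_{s=1}^{n}\tr\Big(\prod_{\ell:\,k_\ell=s}\rho(x_{i_\ell j_\ell})\Big),
\]
where inside each trace the factors are multiplied in increasing order of $\ell$. Grouping the slot-assignments $(k_1,\dots,k_m)$ by the set partition of $\{1,\dots,m\}$ given by the level sets of $\ell\mapsto k_\ell$, and using that $\tr(\rho(x_{ij}))=0$ because the $x_{ij}$ are traceless, only partitions whose blocks all have size $\ge2$ survive. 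A partition with $b$ blocks is realised by $n(n-1)\cdots(n-b+1)\sim n^{b}$ slot-assignments, and $b\le m/2$ with equality precisely for pair partitions; dividing by $(nv)^{m/2}$ and letting $n\to\infty$ therefore yields $0$ when $m$ is odd and
\[
\lim_{n\to\infty}\omega\Big(\prod_{\ell=1}^{m}\tfrac{1}{\sqrt{nv}}\,j_n(x_{i_\ell j_\ell})\Big)=\sum_{\pi}\ \prod_{\{\ell,\ell'\}\in\pi}\frac1v\,\tr\big(\rho(x_{i_\ell j_\ell})\rho(x_{i_{\ell'}j_{\ell'}})\big)
\]
when $m$ is even, the sum running over all pair partitions $\pi$ of $\{1,\dots,m\}$; by cyclicity of $\tr$ the order of the two factors inside each pair is irrelevant. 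The right-hand side is exactly Wick's formula for the centered commutative Gaussian family $(g_{ij})_{1\le i,j\le d}$ with $\E(g_{ij}g_{kl})=\frac1v\tr(\rho(x_{ij})\rho(x_{kl}))$.

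It then remains to identify $(g_{ij})$ with $(m_{ij}(1))$ and to pass to the process level. For the covariance one computes $\tr(\rho(x_{ij})\rho(x_{kl}))$ directly in $\operatorname{M}_d(\C)$ — recalling $v=\frac{d}{d-1}\tr(\rho(x_{ii})\rho(x_{ii}))=\frac1d$ — obtaining $\E(g_{ij}g_{kl})=\delta_{il}\delta_{jk}$ for $i\ne j$ and $\E(g_{ii}g_{jj})=\delta_{ij}-\frac1d$, which is the covariance of the entries of a standard Brownian motion on $\operatorname{H}^0_d$ at time $1$; one also checks $g_{ij}^{*}=g_{ji}$ (since $x_{ij}^{*}=x_{ji}$, whence $j_n(x_{ij})^{*}=j_n(x_{ji})$, so convergence of all $*$-moments reduces to the moments above) and $\sum_i g_{ii}=0$ (since $\sum_i x_{ii}=0$). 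For the process level, given $0\le t_1\le\cdots\le t_r$ one writes $j_{\lfloor nt_a\rfloor}(x_{ij})=\sum_{b\le a}\big(j_{\lfloor nt_b\rfloor}(x_{ij})-j_{\lfloor nt_{b-1}\rfloor}(x_{ij})\big)$ (with $t_0=0$), the increments over the disjoint blocks of tensor factors being tensor-independent; the very same moment expansion — now the two slots of a pair must sit in a single block, each pair contributing a factor $(\lfloor nt_b\rfloor-\lfloor nt_{b-1}\rfloor)/n\to t_b-t_{b-1}$ — shows the rescaled increments converge jointly to independent copies, scaled by the lengths $t_b-t_{b-1}$, of the family $(g_{ij})$, which is precisely a finite-dimensional distribution of a Brownian motion on $\operatorname{H}^0_d$. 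The main obstacle is the combinatorics of the moment expansion: one must verify that every pair partition contributes with the same weight — the crossing ones included, in contrast with the free case — which is exactly what forces the limit to be commutative Gaussian and is where traciality of $\tr$ enters, and then match the resulting covariance with Brownian motion on $\operatorname{H}^0_d$.
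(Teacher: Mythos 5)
The paper does not prove this statement---it is quoted as Biane's theorem and cited from \cite{bianepermutation}, so there is no internal proof to compare against. Your argument is a correct, self-contained proof by the standard tensor-product central limit theorem, which is indeed the natural route: you check that each $x_{ij}$ lies in $\mathfrak g$ (the $x_{ii}$ being traceless diagonal, hence in the span of the $h_k$), so it is primitive for $\Delta$ and $j_n(x_{ij})=\sum_{k=1}^n\rho(x_{ij})^{(k)}$ decomposes into tensor-independent, identically distributed, traceless increments; the moment expansion over set partitions then kills all but the pair partitions at rate $n^{m/2}$, yielding Wick's formula; and the covariance $\frac1v\tr(\rho(x_{ij})\rho(x_{kl}))$ (with $v=1/d$) matches $\delta_{il}\delta_{jk}-\frac1d\delta_{ij}\delta_{kl}$, the time-one covariance of Brownian motion on $\operatorname{H}^0_d$ for the inner product $\Tr(MN)$. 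The extension to finite-dimensional marginals via independent increments over disjoint blocks of tensor slots, and the reduction of $*$-moments to ordinary moments via $x_{ij}^*=x_{ji}$, are also handled correctly.

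One small conceptual slip worth correcting: you attribute the survival of \emph{crossing} pair partitions (hence the commutative Gaussian limit) to ``traciality of $\tr$.'' Traciality is what you use for cyclicity, i.e.\ $\tr(ab)=\tr(ba)$, so that the covariance assigned to a pair is unambiguous; but what actually makes \emph{every} pair partition contribute with equal weight---crossing ones included, in contrast to the free case---is the factorization of the product state $\omega=\tr^{\otimes\infty}$ over tensor slots, i.e.\ tensor-independence of the increments. (Freeness also lives over a tracial state; it is the different independence structure, not non-traciality, that kills the crossing terms there.) This does not affect the validity of your calculation, only the stated explanation.
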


By the above theorem, we see that the law of the noncommutative process 
\begin{align}\label{noncomproc}
\Big(\frac{1}{\sqrt{n v}}j_{\lfloor nt\rfloor}\Big)_{t\geq0}
\end{align}
 restricted to the subalgebra of theorem \ref{Restriction for p=1} converges, as $n$  goes to infinity, towards the law of $(\Tr(M_{11}(t)^{k-1}), \Tr(M(t)^k), k\geq1, t\geq0)$. We will see that this process, which is equivalent to the process of  eigenvalues of two consecutive minors of $(M(t),t\geq0)$, is Markovian. More generally, if $K$ is a subgroup of $G$, the law of the noncommutative process (\ref{noncomproc}) restricted to the subalgebra  $\mathcal{U}(\mathfrak{g})^{K}$ converges, as $n$  goes to infinity, to a commutative process which remains Markovian. The fact that the limit process is a Markov process will follow by
     It\^o's calculus and invariant theory in a commutative framework. A function $f\colon\text{M}_d(\C)\to \C$ is seen as a function from $\C^{d^2}$ to $\C$. 
   
    \begin{defi} Let  $K$ be a subgroup of $G$. A function $f$ from $\operatorname{M}_d(\mathbb{C})$ to $\C$ is said to be $K$-invariant if  
  \begin{align*}
 \forall \,k\in K  \quad \forall M\in \operatorname{M}_d(\mathbb{C}), \quad f(kMk^{-1})=f(M).
 \end{align*} 
 \end{defi}

Let $\mathcal{P}(\mathfrak{g})$ denote the algebra of all complex-valued polynomial functions on $\mbox{M}_d(\mathbb C)$, \ie $\mathcal{P}(\mathfrak{g})$  is the set  of all polynomials in coordinates of a matrix of $\text{M}_d(\C)$.  For any subgroup $K$ of $G$, the set of $K$-invariant elements of  $\mathcal{P}(\mathfrak{g})$ is denoted $\mathcal{P}(\mathfrak{g})^K$.  
The following theorem, which is a commutative version of theorem \ref{theoklink}, has been proved in (\cite{Klink}).
  
\begin{thm}[\cite{Klink}] \label{invcom} It exists $m\in \N$, such that the subalgebra  $\mathcal{P}(\mathfrak{g})^{\operatorname{GL}_{d-p}(\mathbb{C})\times \mathbb{C^*}^{p}}$ is   generated by the constants and polynomials 
\[
M\in \operatorname{M}_d(\C)\mapsto\Tr(M_{i_1i_2}\cdots M_{i_qi_1}),\quad q\in\{1\dots,m\}, \, i_1,\dots,i_q\in\{1,\dots,p+1\}.
\] 
\end{thm}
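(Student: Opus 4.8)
The plan is to deduce the statement from its noncommutative counterpart, Theorem~\ref{theoklink}, by passing to the associated graded algebra; I will also indicate a self-contained alternative via classical invariant theory.

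First I would dispose of the easy half, that the displayed polynomials are indeed $K$-invariant. Writing an element of $K=\operatorname{GL}_{d-p}(\mathbb C)\times\mathbb{C^*}^{p}$ as $k=\operatorname{diag}(k_1,\dots,k_{p+1})$ with $k_1\in\operatorname{GL}_{d-p}(\mathbb C)$ and $k_2,\dots,k_{p+1}\in\mathbb C^*$, the $(i,j)$-block of $kMk^{-1}$ is $k_iM_{ij}k_j^{-1}$; hence the cyclic product $M_{i_1i_2}\cdots M_{i_qi_1}$ is conjugated by $k_{i_1}$, and its trace is left unchanged. The content of the theorem is therefore that these polynomials generate $\mathcal P(\mathfrak g)^K$ and that finitely many of them — those with $q\le m$ — already suffice.

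For the generation statement I would use the leading term / symbol map. Recall that $\mathcal U(\mathfrak g)$ is filtered by the $\mathcal U_n(\mathfrak g)$, that $\operatorname{gr}\mathcal U(\mathfrak g)\cong S(\mathfrak g)$ by Poincar\'e--Birkhoff--Witt, and that the trace form gives a $G$-equivariant isomorphism $S(\mathfrak g)\cong S(\mathfrak g^*)=\mathcal P(\mathfrak g)$. Because $K$ is reductive, the functor of $K$-invariants is exact; concretely, the Reynolds operator $\mathcal R\colon\mathcal U(\mathfrak g)\to\mathcal U(\mathfrak g)^K$ is a filtration-preserving projection, from which one obtains $\operatorname{gr}\bigl(\mathcal U(\mathfrak g)^K\bigr)=\bigl(\operatorname{gr}\mathcal U(\mathfrak g)\bigr)^K=\mathcal P(\mathfrak g)^K$ and, with a little more care, that the symbols of a generating set of the algebra $\mathcal U(\mathfrak g)^K$ form a generating set of $\operatorname{gr}(\mathcal U(\mathfrak g)^K)$. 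I would then invoke Theorem~\ref{theoklink}: the symbol of ${\bf Tr}(E_{i_1i_2}\cdots E_{i_qi_1})$ is precisely the polynomial $M\mapsto\Tr(M_{i_1i_2}\cdots M_{i_qi_1})$ (up to reversing the cyclic order of the indices, which is forced by the trace form but leaves the family of generators unchanged), so these polynomials generate $\mathcal P(\mathfrak g)^K$. For the bound $m$ I would appeal to Hilbert's finiteness theorem: since $K$ is reductive, $\mathcal P(\mathfrak g)^K$ is finitely generated, so finitely many of the above polynomials suffice, and one takes $m$ larger than their degrees.

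The step I expect to be the main obstacle is the "graded compatibility" above — that for a reductive group acting on a filtered algebra the associated graded of the invariants coincides with the invariants of the associated graded, and that symbols of algebra generators generate the graded invariants. This is standard, but it uses the reductivity of $K$ and some bookkeeping with the weighted grading attached to the degrees of the generators. If one wishes to avoid it, a self-contained alternative is available: decompose $\mathbb C^d=V_1\oplus\dots\oplus V_{p+1}$ with $V_1=\mathbb C^{d-p}$ and $V_2=\dots=V_{p+1}=\mathbb C$, so that $K=\prod_{i=1}^{p+1}\operatorname{GL}(V_i)$ and the conjugation action on $\operatorname{M}_d(\mathbb C)=\bigoplus_{i,j}\operatorname{Hom}(V_j,V_i)$ is exactly the gauge action on representations of the complete quiver on $p+1$ vertices; the First Fundamental Theorem for quiver invariants (Le Bruyn--Procesi, resting on Procesi's and Weyl's theorems for $\operatorname{GL}_n$) says this invariant ring is generated by traces along oriented cycles, which are precisely the $\Tr(M_{i_1i_2}\cdots M_{i_qi_1})$, and Hilbert's theorem again supplies $m$.
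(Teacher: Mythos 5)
The paper does not give a proof of this statement: it is quoted directly from Klink--Ton-That, exactly as Theorem~\ref{theoklink} is. So you are supplying a proof where the paper has none, and the right question is simply whether your argument is sound.

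Your main route --- deducing the commutative statement from Theorem~\ref{theoklink} by passing to the associated graded --- has a genuine gap, and it is precisely at the step you flag as ``the main obstacle.'' The Reynolds-operator part is fine: reductivity of $K$ does give $\operatorname{gr}\bigl(\mathcal U(\mathfrak g)^K\bigr)=\bigl(\operatorname{gr}\mathcal U(\mathfrak g)\bigr)^K=\mathcal P(\mathfrak g)^K$. What is \emph{not} true, even with reductivity and careful bookkeeping, is that symbols of algebra generators of a filtered algebra generate its associated graded. That implication goes the wrong way. The standard and automatic direction is the converse (``quantization''): if homogeneous elements generate $\operatorname{gr}A$ and one lifts them arbitrarily to $A$, the lifts generate $A$, by an easy induction on filtration degree. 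The ``dequantization'' direction you need fails in general: take $A=\mathbb C[t]$ with the degree filtration, $a_1=t^3,\ a_2=t^2,\ a_3=t+t^5$; these generate $A$ (since $a_3-a_1a_2=t$), yet their symbols $\bar t^3,\bar t^2,\bar t^5$ generate only $\mathbb C[\bar t^2,\bar t^3]\subsetneq\mathbb C[\bar t]=\operatorname{gr}A$. The obstruction is exactly the cancellation of leading terms you would need to control, and there is no reason it should be controllable here without already knowing the commutative statement. Worse, the natural way to prove Theorem~\ref{theoklink} \emph{is} to start from Theorem~\ref{invcom} and apply the automatic quantization direction; so your derivation risks being circular as well as incomplete.

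Your alternative route is correct and is the one to keep. Decomposing $\mathbb C^d=V_1\oplus\dots\oplus V_{p+1}$ identifies $\operatorname{M}_d(\mathbb C)$ with the representation space of the complete quiver (with loops) on $p+1$ vertices for the dimension vector $(d-p,1,\dots,1)$, with $K=\prod_i\operatorname{GL}(V_i)$ acting by base change. The first fundamental theorem for quiver invariants (Le Bruyn--Procesi, which in turn rests on Procesi's trace-identity FFT and Weyl's FFT for $\operatorname{GL}_n$) says the invariant ring is generated by traces of oriented cycles, i.e.\ exactly by $M\mapsto\Tr(M_{i_1i_2}\cdots M_{i_qi_1})$; finite generation of $\mathcal P(\mathfrak g)^K$ (Hilbert, since $K$ is reductive) then gives the cutoff $m$. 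This is essentially the content of Klink--Ton-That's argument as well, so you would do better to present this as the proof and drop the symbol-map reduction, or at most keep the symbol map to go in the opposite, legitimate direction, namely to deduce Theorem~\ref{theoklink} from the present statement.
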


 Let us recall the following property of Brownian motion and invariant functions. In what follows we denote by $\langle \cdot, \cdot\rangle$ the usual quadratic covariation, and by $\operatorname d$ and $\operatorname d^2$ the usual first and second order differentials. 
 \begin{prop} 
 Let $g\in \operatorname{GL}_d(\C)$, and $f$ and $h$ be twice differentiable functions from $\operatorname{M}_d(\C)$ to $\C$ such that 
 \begin{align} \label{invg} \forall M\in \operatorname{M}_d(\C) \quad f(gMg^{-1})=f(M) \textrm{ and  }\,  h(gMg^{-1})=h(M).\end{align} If $B$ is a standard Brownian motion on $\operatorname{H}_d$, then
\[
 \langle \operatorname d\!f(gMg^{-1})(dB),  \operatorname d\!h(gMg^{-1})(dB)\rangle = \langle \operatorname d\!f(M)(dB),  \operatorname d\!h(M)(dB)\rangle
 \]
 and
 \[
 \langle \operatorname d^2\!f(gMg^{-1})(dB),dB\rangle = \langle \operatorname d^2\!f(M)(dB),dB\rangle.
 \]
 \end{prop}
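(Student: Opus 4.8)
The plan is to reduce both identities to one elementary fact: the covariation of the Hermitian Brownian motion on $\operatorname{H}_d$ is governed by the trace bilinear form on $\operatorname{M}_d(\C)$, and that form is invariant under conjugation. Writing $M=(m_{ij})$ and, for a function $f$ (it suffices to treat polynomial $f$; the general twice-differentiable case is word for word the same in real coordinates), setting $\nabla f(P)=\big(\frac{\partial f}{\partial m_{ij}}(P)\big)_{1\le i,j\le d}$ so that $df(P)(V)=\sum_{i,j}\frac{\partial f}{\partial m_{ij}}(P)\,v_{ij}$, one checks by expanding $B$ in a $\Tr$-orthonormal basis of $\operatorname{H}_d$ that $d\langle B_{ij},B_{kl}\rangle=\delta_{il}\delta_{jk}\,dt$, i.e. the covariation tensor of $B$ is (dual to) the form $\langle A,B\rangle=\Tr(AB)$, equivalently the Casimir element $\sum_{i,j}e_{ij}\otimes e_{ji}$. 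Feeding this into Itô's rule gives, for any fixed matrix $P$,
\[
d\langle df(P)(dB),\,dh(P)(dB)\rangle=\Big(\sum_{i,j}\frac{\partial f}{\partial m_{ij}}(P)\frac{\partial h}{\partial m_{ji}}(P)\Big)dt=\Tr\big(\nabla f(P)\nabla h(P)\big)\,dt,
\]
and $\langle d^2f(P)(dB),dB\rangle=\big(\sum_{i,j}\frac{\partial^2 f}{\partial m_{ij}\partial m_{ji}}(P)\big)dt$. So the first assertion amounts to saying that $P\mapsto\Tr(\nabla f(P)\nabla h(P))$ is constant on the conjugacy orbits of $g$, and the second to the analogous statement for $P\mapsto\sum_{i,j}\partial^2 f/\partial m_{ij}\partial m_{ji}(P)$.

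For the first of these I would differentiate the invariance relation $f=f\circ\Ad(g)$ by the chain rule. Since $N:=gMg^{-1}$ is \emph{linear} in $M$, with $\frac{\partial N_{pq}}{\partial m_{ab}}=g_{pa}(g^{-1})_{bq}$, the chain rule yields $\nabla f(M)=g^{T}\,\nabla f(gMg^{-1})\,(g^{T})^{-1}$, hence $\nabla f(gMg^{-1})=(g^{T})^{-1}\nabla f(M)\,g^{T}$, and likewise for $h$. Substituting and using cyclicity of the trace, the factors $g^{T}$ and $(g^{T})^{-1}$ cancel:
\[
\Tr\big(\nabla f(gMg^{-1})\nabla h(gMg^{-1})\big)=\Tr\big((g^{T})^{-1}\nabla f(M)\nabla h(M)\,g^{T}\big)=\Tr\big(\nabla f(M)\nabla h(M)\big).
\]

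For the second identity, differentiate $f=f\circ\Ad(g)$ twice; because $\Ad(g)$ is linear there are no first-order remainder terms, so
\[
\frac{\partial^2 f}{\partial m_{ab}\partial m_{cd}}(M)=\sum_{p,q,r,s}\frac{\partial^2 f}{\partial m_{pq}\partial m_{rs}}(gMg^{-1})\,g_{pa}(g^{-1})_{bq}\,g_{rc}(g^{-1})_{ds}.
\]
Taking $(a,b)=(i,j)$, $(c,d)=(j,i)$ and summing over $i,j$, the sum over $i$ collapses via $\sum_i g_{pi}(g^{-1})_{is}=\delta_{ps}$ and the sum over $j$ via $\sum_j g_{rj}(g^{-1})_{jq}=\delta_{rq}$, leaving $\sum_{i,j}\frac{\partial^2 f}{\partial m_{ij}\partial m_{ji}}(M)=\sum_{p,q}\frac{\partial^2 f}{\partial m_{pq}\partial m_{qp}}(gMg^{-1})$, which is exactly the claim. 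The only point requiring care — and really the only thing with content here — is that when $g$ is not unitary $\Ad(g)$ does not preserve $\operatorname{H}_d$, so one cannot take the shortcut "$\Ad(g^{-1})B$ is again a Brownian motion on $\operatorname{H}_d$"; what does survive, and what drives the two telescopings above, is that the covariation tensor $\sum_{i,j}e_{ij}\otimes e_{ji}$ of the Hermitian Brownian motion is $\Ad(g)$-invariant. Beyond that observation the whole argument is bookkeeping with the chain rule, and it goes through verbatim, in real coordinates, for arbitrary twice-differentiable $f$ and $h$.
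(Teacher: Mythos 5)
Your proof is correct and rests on exactly the same two ingredients as the paper's: the $\Ad(\operatorname{GL}_d(\C))$-invariance of the covariation tensor of Hermitian Brownian motion (which the paper phrases as $\langle (g\,dB\,g^{-1})_{ij},(g\,dB\,g^{-1})_{kl}\rangle=\langle dB_{ij},dB_{kl}\rangle$ and you phrase as the invariance of $\sum_{i,j}e_{ij}\otimes e_{ji}$), plus the chain rule applied to $f=f\circ\Ad(g)$. You simply unpack the paper's one-line argument in explicit coordinates, computing the gradients, Hessian contraction, and the cancellation of $g^{T}$ and $(g^{T})^{-1}$ by hand, which is the same reasoning at a finer level of detail.
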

 \begin{proof}   Since $B$ is a standard Brownian motion on $\operatorname{H}_d$, 
 \[
 \langle (gdBg^{-1})_{ij},(gdBg^{-1})_{kl}\ra= \la dB_{ij},dB_{kl}\ra, \quad i,j,k,l\in \{1,\dots,d\}.
 \]
 Thus 
 \begin{align*}
 \langle \operatorname d\!f(gMg^{-1})(dB),\operatorname d\!h(gMg^{-1})(dB)\rangle&= \langle \operatorname d\!f(gMg^{-1})(gdBg^{-1}), \operatorname d\!h(gMg^{-1})(gdBg^{-1})\rangle,
 \end{align*}
 and
  \begin{align*}
 \langle \operatorname d^2\!f(gMg^{-1})(dB),dB\rangle&= \langle \operatorname d^2\!f(gMg^{-1})(gdBg^{-1}),gdBg^{-1}\rangle.
 \end{align*}
Property (\ref{invg}) implies
  \begin{align*}
 \langle \operatorname d\!f(gMg^{-1})(gdBg^{-1}), \operatorname d\!h(g^{-1}Mg)(gdBg^{-1})\rangle= \langle \operatorname d\!f(M)(dB), \operatorname d\!h(M)(dB)\rangle
 \end{align*}
 \begin{align*}
 \langle \operatorname d^2\!f(gMg^{-1})(gdBg^{-1}),gdBg^{-1}\rangle
&= \langle \operatorname d^2\!f(M)(dB),dB\rangle. \qedhere
 \end{align*}
 \end{proof}
 The previous proposition implies the following one. 
\begin{prop} \label{invstable} Let $K$ be a subgroup of $\operatorname{GL}_d(\C)$. If $f$ and $h$ are elements in $\mathcal{P}(\mathfrak{g})^K$, then the functions  
\[M\in \operatorname{M}_d(\C)\mapsto  \langle\operatorname  d\!f(M)(dB),  \operatorname d\!h(M)(dB)\rangle,\]
and
\[M\in \operatorname{M}_d(\C)\mapsto   \langle \operatorname d^2\!f(M)(dB),dB\rangle\]
are also $K$-invariant polynomial functions.
\end{prop}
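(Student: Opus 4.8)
The plan is to derive this immediately from the preceding proposition, applying it to one group element of $K$ at a time, after first recording that the two maps in question are genuine polynomial functions of $M$. Write $B=(B_{ij})$ for the standard Brownian motion on $\operatorname{H}_d$, so that the covariations $\langle dB_{ij},dB_{kl}\rangle$ are deterministic constants (up to the common $dt$ factor). Expanding the differentials in the coordinates of $\operatorname{M}_d(\C)$ gives
\[
\langle \operatorname d\!f(M)(dB),\operatorname d\!h(M)(dB)\rangle=\sum_{i,j,k,l}\partial_{ij}f(M)\,\partial_{kl}h(M)\,\langle dB_{ij},dB_{kl}\rangle
\]
and
\[
\langle \operatorname d^2\!f(M)(dB),dB\rangle=\sum_{i,j,k,l}\partial_{ij}\partial_{kl}f(M)\,\langle dB_{ij},dB_{kl}\rangle ;
\]
since $f,h\in\mathcal{P}(\mathfrak{g})$, each partial derivative occurring here is again a polynomial in the entries of $M$, so both right-hand sides lie in $\mathcal{P}(\mathfrak{g})$.

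Next I would fix $g\in K$ and check invariance under conjugation by $g$. Because $f,h\in\mathcal{P}(\mathfrak{g})^K$, the pair $(f,h)$ satisfies hypothesis (\ref{invg}) for this $g$; for the second function only the invariance of $f$ is used, in agreement with the statement. Applying the preceding proposition with this $g$ yields
\[
\langle \operatorname d\!f(gMg^{-1})(dB),\operatorname d\!h(gMg^{-1})(dB)\rangle=\langle \operatorname d\!f(M)(dB),\operatorname d\!h(M)(dB)\rangle
\]
and
\[
\langle \operatorname d^2\!f(gMg^{-1})(dB),dB\rangle=\langle \operatorname d^2\!f(M)(dB),dB\rangle .
\]
The left-hand sides are exactly the values at $gMg^{-1}$ of the two functions from the first step, so each of them is invariant under conjugation by $g$; letting $g$ range over $K$ shows that both functions belong to $\mathcal{P}(\mathfrak{g})^K$, which is the assertion.

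I do not expect a genuine obstacle here: the analytic content — the rotation invariance of the covariance structure of $B$, which forces $\langle (gdBg^{-1})_{ij},(gdBg^{-1})_{kl}\rangle=\langle dB_{ij},dB_{kl}\rangle$ — has already been isolated in the preceding proposition, and what remains is the bookkeeping above. The only points deserving a little care are reading $\operatorname d\!f(M)(dB)$ as an It\^o-type object whose covariation is the deterministic quantity displayed above (so that it really is a function of $M$), noting that differentiating a polynomial and contracting it against constant covariations keeps one inside $\mathcal{P}(\mathfrak{g})$, and observing that the invariance hypothesis of the preceding proposition is available for every $g\in K$ precisely because $f$ and $h$ are $K$-invariant.
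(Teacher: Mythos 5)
Your proof is correct and is essentially the paper's argument made explicit: the paper dispatches this proposition with a single sentence, ``The previous proposition implies the following one,'' and what you have done is precisely unpack that sentence — verify that both quadratic-covariation expressions are polynomial in $M$ (partial derivatives of polynomials contracted against the constant covariances $\langle dB_{ij},dB_{kl}\rangle$), and then apply the preceding proposition to each $g\in K$ to read off $K$-invariance. Your side remark that the second function only needs the invariance of $f$ is accurate and consistent with the statement.
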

For a  twice continuously differentiable function $f\colon\mbox{M}_d(\C)\to \C$, multidimensional  It\^o's formula writes
 \[
df(B)=\operatorname d\! f(B)(dB)+\frac{1}{2} \langle \operatorname d^2\! f(B)(dB),dB\rangle.
 \]
 Thus   proposition \ref{invstable} leads to the next proposition  in which the integer $m$ is the one introduced in theorem \ref{invcom}.
 
  \begin{prop} \label{Markovgeneral}   If $(B(t),t\ge 0)$ is a standard  Brownian motion on $\operatorname{H}_d$, the processes 
  \[
  (\Tr(B_{i_1i_2}(t)\cdots B_{i_qi_1}(t)),t\ge 0),
  \]
 $q\in\{1,\dots,m\}$, $ i_1,\dots,i_q\in\{1,\dots,p+1\}$, form a Markov process on $\R^{r}$, with $r=\sum_{k=1}^m(p+1)^k$.
 \end{prop}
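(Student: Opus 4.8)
The plan is to apply the Itô-calculus/invariant-theory mechanism built up in this section. Let $K = \operatorname{GL}_{d-p}(\mathbb{C})\times\mathbb{C^*}^p$, and let $\mathcal{A}$ denote the subalgebra of $\mathcal{P}(\mathfrak{g})$ generated by the constants and the polynomials $M\mapsto \Tr(M_{i_1i_2}\cdots M_{i_qi_1})$ for $q\in\{1,\dots,m\}$ and $i_1,\dots,i_q\in\{1,\dots,p+1\}$. By Theorem~\ref{invcom}, $\mathcal{A} = \mathcal{P}(\mathfrak{g})^K$, so it is a finitely generated commutative algebra stable under the action that sends $f$ to the polynomial functions $M\mapsto\langle\operatorname d\!f(M)(dB),\operatorname d\!h(M)(dB)\rangle$ and $M\mapsto\langle\operatorname d^2\!f(M)(dB),dB\rangle$ by Proposition~\ref{invstable}. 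The claim is that the vector-valued process $R(t) = (\Tr(B_{i_1i_2}(t)\cdots B_{i_qi_1}(t)))_{q,i_1,\dots,i_q}$, taking values in $\R^r$ with $r = \sum_{k=1}^m (p+1)^k$, is a Markov process.

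First I would fix a polynomial generator $f\in\mathcal{A}$ and apply multidimensional Itô's formula to $f(B(t))$, giving
\[
df(B) = \operatorname d\!f(B)(dB) + \tfrac12\langle\operatorname d^2\!f(B)(dB),dB\rangle.
\]
The martingale part is $\operatorname d\!f(B)(dB)$ and the drift is $\tfrac12\langle\operatorname d^2\!f(B)(dB),dB\rangle$; this last term is, by Proposition~\ref{invstable}, again an element of $\mathcal{P}(\mathfrak{g})^K = \mathcal{A}$, hence a polynomial function of the coordinates of $R(t)$. Similarly, for two generators $f,h$, the bracket $\langle\operatorname d\!f(B)(dB),\operatorname d\!h(B)(dB)\rangle$ is in $\mathcal{A}$ and so is a polynomial function of $R(t)$. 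Writing $R = (R_1,\dots,R_r)$ with $R_\alpha = f_\alpha(B)$ for the chosen generators $f_\alpha$, we therefore obtain, for each $\alpha$,
\[
dR_\alpha(t) = dN_\alpha(t) + b_\alpha(R(t))\,dt, \qquad d\langle N_\alpha, N_\beta\rangle(t) = a_{\alpha\beta}(R(t))\,dt,
\]
where $N_\alpha$ is a continuous local martingale, $b_\alpha$ and $a_{\alpha\beta}$ are polynomial functions on $\R^r$, and the key point is that the drift coefficients and the covariation coefficients are functions of $R(t)$ alone, not of the full matrix $B(t)$. Thus $R$ solves a well-posed martingale problem on $\R^r$ whose coefficients depend only on the current state, and standard theory (e.g. the Stroock--Varadhan characterization, or simply that $R$ is an $\R^r$-valued diffusion-type process with state-dependent generator $L = \tfrac12\sum a_{\alpha\beta}\partial_\alpha\partial_\beta + \sum b_\alpha\partial_\alpha$ acting on polynomials) yields the Markov property.

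The main obstacle is the bookkeeping needed to justify that the generators $f_\alpha$ can be taken finite in number and that $\mathcal{A}$, which a priori is generated by words of all lengths $q\le m$, genuinely closes up under the first- and second-order differential operations: this is exactly what Theorem~\ref{invcom} (the finiteness of $m$) together with Proposition~\ref{invstable} (stability of $\mathcal{P}(\mathfrak{g})^K$) provide, so the only real work is to choose a finite generating set, express $b_\alpha$ and $a_{\alpha\beta}$ as polynomials in that set, and invoke existence/uniqueness for the resulting martingale problem. One subtlety to check carefully is that these polynomial coefficients on $\R^r$ are well defined, i.e. do not depend on the choice of preimage in $\mathcal{P}(\mathfrak{g})^K$ — this follows because two polynomial functions of $B$ that agree as elements of $\mathcal{P}(\mathfrak{g})^K$ agree identically, and the image of $B(t)$ under the generating map determines $R(t)$. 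Once this is in place, the Markov property of $R$, and hence the statement, follows.
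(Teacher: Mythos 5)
Your approach is essentially the same as the paper's: apply It\^o's formula to the generating polynomials, invoke Proposition~\ref{invstable} to close the drift and covariation coefficients inside $\mathcal{P}(\mathfrak{g})^K$, and then conclude by the standard diffusion/martingale-problem characterization. However, you gloss over one point that the paper treats explicitly and that is needed for your ``$\R^r$-valued diffusion'' conclusion to be legitimate. Each coordinate process $R_\alpha=f_\alpha(B)$ is complex-valued, so to formulate the martingale problem for the real vector $R$ you must control not only $\langle\operatorname d\!f(B)(dB),\operatorname d\!h(B)(dB)\rangle$ but also the conjugate bracket $\langle\overline{\operatorname d\!f(B)(dB)},\operatorname d\!h(B)(dB)\rangle$, and check that it too is a polynomial in the coordinates of $R$. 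The paper handles this by observing that for $M\in\operatorname{H}_d$ one has $\overline{\Tr(M_{i_1i_2}\cdots M_{i_pi_1})}=\Tr(M_{i_1i_p}\cdots M_{i_2i_1})$, i.e.\ the reverse word $g$, which is another of the generating invariants, so $\langle\overline{\operatorname d\!f(B)(dB)},\operatorname d\!h(B)(dB)\rangle=\langle\operatorname d\!g(B)(dB),\operatorname d\!h(B)(dB)\rangle\in\mathcal{P}(\mathfrak{g})^K$. This also explains the claim that $R$ lives on $\R^r$ rather than $\C^r$: the conjugate of each coordinate is already among the coordinates. You should incorporate this observation; without it, the system of covariations you write down is not visibly closed and the appeal to the martingale problem on $\R^r$ is incomplete.
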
 
 \begin{proof} 
 For $p$ and $q$ two integers in $\{1,\dots,m\}$ and two sequences $i_1,\dots,i_p$ and $j_1,\dots,j_q$ of integers of $\{1,\dots,p+1\}$, let us consider the functions 
$f$, $g$ and $h$ from  $\operatorname{M}_d(\C)$ to $\C$ defined by 
\[
f(M)= \Tr(M_{i_1i_2}\cdots M_{i_pi_1}),\quad g(M)= \Tr(M_{i_1i_p}\cdots M_{i_2i_1}), \quad M\in \operatorname{M}_d(\C),
\]
and
\[
 h(M)=\Tr(M_{j_1j_2}\cdots M_{j_qj_1}), \quad M\in \operatorname{M}_d(\C).
\]
Since $\overline{f(M)}=g(M)$, when $M\in \operatorname{H}_d$, we have 
 \[
  \langle\overline{\operatorname  d\!f(B)(dB)},  \operatorname d\!h(B)(dB)\rangle= \langle\operatorname  d\!g(B)(dB),  \operatorname d\!h(B)(dB)\rangle.
  \]
 Proposition (\ref{invstable}) implies that 
 \[\langle\operatorname  d\!f(B)(dB),  \operatorname d\!h(B)(dB)\rangle, \quad\langle\overline{\operatorname  d\!f(B)(dB)},  \operatorname d\!h(B)(dB)\rangle,\]
and
\[  \langle \operatorname d^2\!f(B)(dB),dB\rangle,\]
are polynomial functions in the processes 
  \[
  \Tr(B_{i_1i_2}\cdots B_{i_qi_1}),
  \]
 $q\in\{1,\dots,m\}$, $ i_1,\dots,i_q\in\{1,\dots,p+1\}$.
Thus proposition follows from usual properties of diffusions (see \cite{oksendal} for example). \end{proof}
 
Let us give a formulation of the last proposition in term of eigenvalues of some particular submatrices of Brownian motion on $\operatorname{H}_d$. In the following lemma a polynomial function 
 \[
 M \in \mbox{M}_d(\C)\mapsto f(M),\] is just denoted $f(M)$.
  \begin{lem}  \label{prodfactors} For any positive integer $q$, and any sequence of integers $i_1,\dots,i_q$ in $\{1,\dots,p+1\}$, the polynomial function 
  \[  \Tr(M_{i_1i_2}\cdots M_{i_qi_1})\] is equal to a finite product of factors of the form
  \begin{align*}
  &{\Tr}( M_{11}^n),
  {\Tr}(M_{1i}M_{i1}M_{11}^m),
  {\Tr}(M_{1i}M_{ij}M_{j1}M_{11}^n),\\
  & M_{ii}, M_{ij}M_{ji},\,( M_{ij}M_{ji})^{-1},
  M_{ij}M_{jk}M_{ki}, 
  \end{align*}
  where  $n\in \N$, and $i,j,k$, are distinct integers in $\{2,\dots,p+1\}$.
 \end{lem}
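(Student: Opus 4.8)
The plan is to argue by induction on the length $q$ of the word $i_1,\dots,i_q$, using the block structure of $M$ to peel off factors one at a time. First I would observe that, writing out $\mathrm{Tr}(M_{i_1i_2}\cdots M_{i_qi_1})$ as an ordinary sum over matrix indices, each index that runs over the \emph{last} $p$ diagonal blocks (i.e. lies in a one-dimensional block labelled $2,\dots,p+1$) is forced to take a single value, whereas indices lying in the big block $1$ run over $\{1,\dots,d-p\}$. Consequently, whenever the cyclic word $i_1 i_2\cdots i_q i_1$ visits a label $i\in\{2,\dots,p+1\}$ at some position, the product "splits" there: the scalar coming from the $i$-th coordinate can be factored out, and the trace decomposes into a product of traces of the sub-words lying between consecutive visits to labels in $\{2,\dots,p+1\}$. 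So the first reduction is: it suffices to treat words in which the labels from $\{2,\dots,p+1\}$ are all \emph{isolated occurrences} separated by blocks of $1$'s (possibly empty), i.e. words of the cyclic form $\cdots 1^{a_0} i_{r_1} 1^{a_1} i_{r_2} 1^{a_2}\cdots$ with $i_{r_s}\in\{2,\dots,p+1\}$.

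Next I would enumerate the "irreducible" pieces that can occur between two successive visits to $\{2,\dots,p+1\}$. Between a visit to $i$ and the next visit to $j$ (with a run of $a$ ones in between), the corresponding factor is $M_{i1}M_{11}^{a}M_{1j}$ if $a\ge 0$ and $i\ne j$, or $M_{ij}$ directly if $a=0$; by the cyclic/trace structure and by conjugating the word cyclically, any such factor can be grouped with its neighbours so that the overall trace is a product of: pure blocks $\mathrm{Tr}(M_{11}^{n})$ (when the word never leaves block $1$), terms $\mathrm{Tr}(M_{1i}M_{11}^{m}M_{i1})$ (one excursion out to coordinate $i$ and back), terms $\mathrm{Tr}(M_{1i}M_{ij}M_{j1})$ and more generally $\mathrm{Tr}(M_{1i}M_{ij}M_{j1}M_{11}^{n})$ (an excursion through two distinct coordinates), the scalars $M_{ii}$ (a fixed point at $i$), $M_{ij}M_{ji}$ (a two-cycle among coordinates), and $M_{ij}M_{jk}M_{ki}$ (a three-cycle among coordinates). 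The only point requiring care is cycles of length $\ge 4$ purely among the coordinate labels $\{2,\dots,p+1\}$: here I would use that each $M_{ij}$ with $i,j\ge 2$ is a \emph{scalar}, so $M_{ij}M_{jk}M_{kl}\cdots M_{si}$ can be rewritten, by inserting $1=(M_{ij}M_{ji})(M_{ij}M_{ji})^{-1}$ etc., as a product of two-cycles $M_{ij}M_{ji}$, three-cycles $M_{ij}M_{jk}M_{ki}$, and inverses $(M_{ij}M_{ji})^{-1}$ — which is exactly why the inverse factors appear in the statement. This is the step I expect to be the main obstacle: organizing the bookkeeping so that every long coordinate-cycle is genuinely reduced to the listed generators without introducing anything new, and checking that the generic nonvanishing of the off-diagonal scalar entries makes the insertion of inverses legitimate (on the Zariski-dense open set where all the $M_{ij}M_{ji}$ are nonzero, which suffices for an identity of polynomial functions / rational functions, and the final claim is naturally read in that rational sense because of the $(M_{ij}M_{ji})^{-1}$ factors).

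Finally I would assemble these observations into the induction: given a word of length $q$, locate a visit to some $i\in\{2,\dots,p+1\}$ (if there is none, the trace is $\mathrm{Tr}(M_{11}^{q})$ and we are done); split off the factor corresponding to the excursion attached to that visit — which is one of $M_{ii}$, $M_{ij}M_{ji}$, $M_{ij}M_{jk}M_{ki}$, $\mathrm{Tr}(M_{1i}M_{11}^{m}M_{i1})$, or $\mathrm{Tr}(M_{1i}M_{ij}M_{j1}M_{11}^{n})$, after the cyclic rearrangement and, for long coordinate-cycles, the two-/three-cycle decomposition above — and apply the induction hypothesis to the strictly shorter remaining word. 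Combining the base case $\mathrm{Tr}(M_{11}^{q})$ with these reductions yields the asserted finite product decomposition, completing the proof.
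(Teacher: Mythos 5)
Your core tools (the scalar-block observation, the inverse trick, induction on $q$) are the right ones, but the plan as sketched has a real gap in the mixed case. You claim that after splitting the trace at every visit to a label in $\{2,\dots,p+1\}$, the resulting excursion scalars $M_{i1}M_{11}^{a}M_{1j}$ can simply be \emph{regrouped with their neighbours} into the listed generators, and that the inverse factor $(M_{ij}M_{ji})^{-1}$ is only needed for cycles lying purely among the coordinate labels $\{2,\dots,p+1\}$. That regrouping claim is not substantiated and in fact fails: an open excursion $M_{i1}M_{11}^{a}M_{1j}$ with $i\ne j$ is not on the list, and a word such as the cyclic sequence $1,i,1,j,1,k$ with $i,j,k$ distinct in $\{2,\dots,p+1\}$ produces three such open scalars $M_{i1}M_{1j}$, $M_{j1}M_{1k}$, $M_{k1}M_{1i}$ that do not combine directly into any allowed factor; you need the inverse trick there too. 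The paper handles precisely this: it rotates to a transition $i_1=1$, $i_2\ne1$, isolates the tail $M_{i_p1}M_{11}^{k}M_{1i_2}$, and when $i_p\ne i_2$ multiplies by $M_{i_2i_p}M_{i_pi_2}$ to close both pieces into shorter cyclic words (one of them already a listed generator), then divides by that $2$-cycle --- which is exactly where $(M_{ij}M_{ji})^{-1}$ enters, in the \emph{mixed} case you omit. So your organizational choice (cut at every visit rather than at a single $1\to\ne1$ transition as the paper does) is genuinely different and could be made to work, but you must extend the insertion-of-$(M_{ij}M_{ji})(M_{ij}M_{ji})^{-1}$ argument to open excursions through block $1$, not only to cycles supported entirely on $\{2,\dots,p+1\}$; as written, the excursion-regrouping step does not go through.
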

 \begin{proof} The lemma, which is  is clearly true for $q=1,2,3$, is proved by induction on $q\in\N^*$.  Suppose such a decomposition exists up to $q-1$, for  a fixed integer $q$ greater than $4$. Let us consider a sequence of integers  $i_1,\dots,i_q$ in $\{1,\dots,p+1\}$. If all the integers of the sequence or none of them are equal to $1$, then the decomposition exists. If it exists two successive  integers, say $i_1$ and $i_2$, such that $i_1=1$, $i_2\ne 1$  then  it exists  integers  $k\le q-1$ and $p\le q$,  such that  $ i_p\ne 1$, and 
 \[ \Tr(M_{i_1i_2}\cdots M_{i_qi_1})=M_{i_2i_3}\cdots M_{i_p1}M^k_{11}M_{1i_2}.\]
 If $i_p=i_2$, then 
  \[ 
  \Tr(M_{i_1i_2}\cdots M_{i_qi_1})=(M_{i_2i_3}\cdots M_{i_{p-1}i_2})(M_{i_21}M^k_{11}M_{1i_2}).
  \]
   If $i_p\ne i_2$, then 
  \[
   \Tr(M_{i_1i_2}\cdots M_{i_qi_1})(M_{i_2i_p}M_{i_pi_2})=(M_{i_2i_3}\cdots M_{i_{p-1}i_p}M_{i_pi_2})(M_{i_p1}M^k_{11}M_{1i_2}M_{i_2i_p}).
   \]
 Induction hypothesis implies that  the above polynomials can be written as a product of factors given in the lemma. 
 \end{proof}
   \begin{prop}  \label{proptr}  If $B$ is a Brownian motion on $\operatorname{H}_d$, then the processes
  \begin{align*}
  &{\Tr}( B_{11}^n),
  {\Tr}(B_{1i}B_{i1}B_{11}^m),
  {\Tr}(B_{1i}B_{ij}B_{j1}B_{11}^n),\\
  & B_{ii}, B_{ij}B_{ji},
  B_{ij}B_{jk}B_{ki}, 
  \end{align*}
  where  $n\in \N$, and $i,j,k$, are distinct integers in $\{2,\dots,p+1\}$, taken together, form a Markov process.
 \end{prop}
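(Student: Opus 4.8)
The plan is to deduce Proposition~\ref{proptr} from Proposition~\ref{Markovgeneral} together with Lemma~\ref{prodfactors}, by showing that the two families of processes generate, in a suitable sense, the same filtration and that transition probabilities of one determine those of the other. More precisely, write $F$ for the finite collection of processes listed in Proposition~\ref{Markovgeneral}, namely the $(\Tr(B_{i_1i_2}(t)\cdots B_{i_qi_1}(t)))_{t\ge0}$ for $q\in\{1,\dots,m\}$ and $i_1,\dots,i_q\in\{1,\dots,p+1\}$, and write $G$ for the finite collection appearing in Proposition~\ref{proptr}. Lemma~\ref{prodfactors} states precisely that every coordinate of $F$ is a polynomial (in fact a product, allowing the factors $(B_{ij}B_{ji})^{-1}$) in the coordinates of $G$; conversely each coordinate of $G$ is, up to the obvious relabeling, one of the coordinates of $F$ (the cases $q=1,2,3,4$ with the relevant index patterns). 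Hence the $\sigma$-algebra generated by $F$ up to time $t$ and the one generated by $G$ up to time $t$ coincide, for every $t\ge0$.

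The key steps, in order, are as follows. First, invoke Proposition~\ref{Markovgeneral} to get that $F=(F(t),t\ge0)$ is a Markov process in $\R^r$. Second, observe from Lemma~\ref{prodfactors} that there is a rational map $\Phi$, defined on the range of $G$, with $F(t)=\Phi(G(t))$ for all $t$ almost surely; here one must note that the factors $B_{ij}(t)B_{ji}(t)$ are strictly positive for $t>0$ (they are squared norms of the off-diagonal Gaussian blocks, hence a.s.\ never vanish), so the inverses $(B_{ij}(t)B_{ji}(t))^{-1}$ make sense and $\Phi$ is a.s.\ well defined along the path. Third, note that conversely $G(t)$ is obtained from $F(t)$ by a fixed coordinate projection (each entry of $G$ literally appears among the entries of $F$), say $G(t)=\Psi(F(t))$. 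Fourth, combine these: for a bounded measurable test function $\phi$ on the state space of $G$,
\[
\E\big(\phi(G(t+s))\,\big|\,\mathcal F^G_t\big)
=\E\big(\phi(\Psi(F(t+s)))\,\big|\,\mathcal F^F_t\big)
=\big(Q^F_s(\phi\circ\Psi)\big)(F(t))
=\big(Q^F_s(\phi\circ\Psi)\big)(\Phi(G(t))),
\]
using $\mathcal F^G_t=\mathcal F^F_t$ in the first equality, the Markov property of $F$ and Proposition~\ref{Markovgeneral} in the second, and $F(t)=\Phi(G(t))$ in the third. Since the right-hand side is a measurable function of $G(t)$ alone, $G$ is a Markov process. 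One should also record that $G$ takes values in a fixed subset of some $\R^{r'}$ (coordinates of the Hermitian blocks, with the positivity constraints on the $B_{ij}B_{ji}$), on which $\Phi$ is continuous, so no measurability pathology arises.

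The main obstacle, and the point deserving the most care, is the passage through the inverse factors $(B_{ij}B_{ji})^{-1}$: Lemma~\ref{prodfactors} writes a polynomial in the entries of $G$ as a product that may involve these inverses, so a priori $F$ is only a \emph{rational} function of $G$, and one must check that the denominators never vanish along Brownian paths for $t>0$. This is where the structure of the blocks is used: $M_{ij}$ and $M_{ji}$ for $i\ne j$ in $\{2,\dots,p+1\}$ are scalar, and for $M$ Hermitian one has $M_{ji}=\overline{M_{ij}}$, so $M_{ij}M_{ji}=|M_{ij}|^2$; applied to $B(t)$ this is $|B_{ij}(t)|^2$, the squared modulus of a nondegenerate complex Gaussian, which is a.s.\ positive for every fixed $t>0$, and in fact a.s.\ positive for all $t>0$ simultaneously since a planar Brownian motion a.s.\ never returns to a given point. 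Thus $\Phi$ is a.s.\ well defined on the whole path $(G(t))_{t>0}$, and the behaviour at $t=0$ is irrelevant for the Markov property on $(0,\infty)$ (and the identity extends to $t=0$ by continuity on the relevant subsets where the diagonal and trace coordinates are concerned). A secondary, purely bookkeeping point is to verify that each process in the list of Proposition~\ref{proptr} does occur among those of Proposition~\ref{Markovgeneral} after choosing $m\ge4$, which is immediate by inspecting the index strings $(1,1,\dots,1)$, $(1,i,1)$, $(1,i,j,1)$, $(i,i)$, $(i,j,i)$, $(i,j,k,i)$ and their cyclic forms.
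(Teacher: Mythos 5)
Your proposal is correct and follows essentially the same route as the paper's one-line proof: invoke Lemma~\ref{prodfactors} to set up a bijective correspondence between the coordinates of the process in Proposition~\ref{Markovgeneral} and those in Proposition~\ref{proptr}, then transfer the Markov property through that correspondence. You spell out details the paper leaves implicit --- in particular the almost-sure non-vanishing of the denominators $B_{ij}B_{ji}=|B_{ij}|^2$ making the rational map well defined --- but the underlying argument is the same.
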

 \begin{proof} 
Lemma \ref{prodfactors} implies that there is a bijection between the Markov process of   proposition \ref{Markovgeneral} and the process of   proposition \ref{proptr}, which is consequently Markovian too.
 \end{proof}
The following theorem is an immediate consequence of  the previous proposition.
 \begin{thm} \label{maintheomat} Let $p$ be a positive integer and $B$ be a Brownian motion on $\operatorname{H}_d$. Then the processes of the eigenvalues of the matrices, 
    \begin{align*}
  &B_{11},\,
\left(
\begin{array}{cc}
 B_{11} & B_{1i} \\
 B_{i1} &  B_{ii}   
\end{array}
\right),\, \left(
\begin{array}{ccc}
 B_{11} & B_{1i} & 0\\
0 & 0 &B_{ij}  \\
 B_{j1} & 0 & 0\\
\end{array}
\right),
  \end{align*}
  and the complex  processes,
 \[ B_{ij}B_{ji},\, B_{ij}B_{jk}B_{ki}, \]
 where $i,j,k$, are distinct integers in $\{2,\dots,p+1\}$, taken together, form a Markov process.
 \end{thm}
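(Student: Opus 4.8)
The plan is to deduce Theorem \ref{maintheomat} directly from Proposition \ref{proptr}, by showing that the collection of processes listed in the theorem generates (as an algebra, or at least determines pointwise) exactly the same information as the collection of processes in Proposition \ref{proptr}. The key observation is that the eigenvalues of a matrix are symmetric functions of its entries expressible through traces of its powers, and conversely traces of powers are polynomials in the eigenvalues; so passing from "eigenvalue processes" to "trace-of-power processes" is a polynomial bijection that preserves the Markov property, exactly as in the passage from Proposition \ref{Markovgeneral} to Proposition \ref{proptr}.

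Concretely, I would proceed as follows. First, for the matrix $B_{11}$, the eigenvalue process carries the same information as $(\Tr(B_{11}^n),\,n\in\N)$ by Newton's identities. Second, for the $2\times 2$ matrix $\left(\begin{smallmatrix} B_{11} & B_{1i}\\ B_{i1} & B_{ii}\end{smallmatrix}\right)$ —- here $B_{11}$ is itself a $(d-p)\times(d-p)$ block, $B_{1i}$ a row, $B_{i1}$ a column, $B_{ii}$ a scalar, so this is a $(d-p+1)\times(d-p+1)$ matrix -— its traces of powers expand, via the block structure, into sums of words of the form $\Tr(B_{11}^n)$, $\Tr(B_{1i}B_{i1}B_{11}^m)$, together with $B_{ii}$ and products like $B_{ii}^k\Tr(B_{1i}B_{i1}B_{11}^m)$; conversely each generator $\Tr(B_{1i}B_{i1}B_{11}^m)$ and $B_{ii}$ is recovered from finitely many such traces of powers (with varying $n,m$) together with the $\Tr(B_{11}^n)$ already obtained. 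Third, for the $3\times 3$-block matrix $\left(\begin{smallmatrix} B_{11} & B_{1i} & 0\\ 0 & 0 & B_{ij}\\ B_{j1} & 0 & 0\end{smallmatrix}\right)$ one computes traces of powers: because of the many zero blocks, only cyclic words that respect the nonzero pattern survive, and these are precisely monomials in $\Tr(B_{11}^n)$, $\Tr(B_{1i}B_{i1}B_{11}^n)$ for the relevant substitutions, and $\Tr(B_{1i}B_{ij}B_{j1}B_{11}^n)$, together with $B_{ij}B_{ji}$. Fourth, the scalar processes $B_{ij}B_{ji}$ and $B_{ij}B_{jk}B_{ki}$ are already literally among the generators of Proposition \ref{proptr}. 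Putting these computations together shows that the algebra generated by the processes in the theorem equals the algebra generated by the processes in Proposition \ref{proptr} (the inverses $(B_{ij}B_{ji})^{-1}$ occurring in Lemma \ref{prodfactors} cause no trouble, since $B_{ij}B_{ji}$ is in the list and one only needs functional dependence, not algebraic generation). Hence there is a measurable bijection intertwining the two processes, and Markovianity transfers.

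The step I expect to require the most care is the bookkeeping in the second and third items: verifying that expanding $\Tr$ of a power of the block matrix produces exactly the advertised family of words and nothing more, and conversely that this family can be disentangled back into the individual generators $\Tr(B_{1i}B_{i1}B_{11}^m)$, $\Tr(B_{1i}B_{ij}B_{j1}B_{11}^n)$, $B_{ii}$, $B_{ij}B_{ji}$ by solving a (triangular, hence invertible) linear system in the exponents. Since the matrices in the theorem were evidently chosen precisely so that this works, the argument is a finite linear-algebra verification; I would present it by displaying the expansion of $\Tr$ of the $k$-th power for each of the three matrices and noting the triangularity. Everything else —- the reduction to a Markov-property-preserving bijection -— is identical in spirit to the proof of Proposition \ref{proptr} and can be stated in one sentence. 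Thus:

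\begin{proof}
By Lemma \ref{prodfactors} and Proposition \ref{Markovgeneral}, the processes listed in Proposition \ref{proptr} form a Markov process. It therefore suffices to exhibit a measurable bijection between that family of processes and the family of this theorem. For the matrix $B_{11}$, Newton's identities give a polynomial bijection between its eigenvalue process and $(\Tr(B_{11}^n))_{n\in\N}$. For the matrix $\left(\begin{smallmatrix} B_{11} & B_{1i}\\ B_{i1} & B_{ii}\end{smallmatrix}\right)$, expanding $\Tr$ of its $k$-th power according to the block decomposition yields a polynomial in $(\Tr(B_{11}^n))_n$, $(\Tr(B_{1i}B_{i1}B_{11}^m))_m$ and $B_{ii}$; the resulting system, ordered by the total degree in these generators, is triangular with invertible leading coefficients, so its eigenvalue process is in polynomial bijection with $(\Tr(B_{11}^n))_n\cup(\Tr(B_{1i}B_{i1}B_{11}^m))_m\cup\{B_{ii}\}$. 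Likewise, because of the zero blocks, $\Tr$ of the $k$-th power of $\left(\begin{smallmatrix} B_{11} & B_{1i} & 0\\ 0 & 0 & B_{ij}\\ B_{j1} & 0 & 0\end{smallmatrix}\right)$ expands into monomials in $(\Tr(B_{11}^n))_n$, $(\Tr(B_{1i}B_{i1}B_{11}^n))_n$, $(\Tr(B_{1i}B_{ij}B_{j1}B_{11}^n))_n$ and $B_{ij}B_{ji}$, and the same triangularity argument applies. Finally $B_{ij}B_{ji}$ and $B_{ij}B_{jk}B_{ki}$ appear verbatim in Proposition \ref{proptr}, while the factors $(B_{ij}B_{ji})^{-1}$ occurring there are measurable functions of the already-recovered $B_{ij}B_{ji}$. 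Hence the two families generate the same $\sigma$-algebras at each time and along any finite set of times, so the family of this theorem is Markov.
\end{proof}
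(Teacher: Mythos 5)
Your proof is correct and follows the paper's own route: the paper deduces Theorem \ref{maintheomat} from Proposition \ref{proptr} as ``an immediate consequence,'' and you have simply made explicit the Newton-identity and triangularity bookkeeping underlying that immediacy, producing the required pointwise bijection between the eigenvalue family and the trace family of Proposition \ref{proptr}. One minor inaccuracy worth noting: the expansion of $\Tr$ of a power of the $3\times 3$ block matrix involves only monomials in $\Tr(B_{11}^n)$ and $\Tr(B_{1i}B_{ij}B_{j1}B_{11}^n)$ --- since that matrix contains no $B_{i1}$ or $B_{ji}$ block, neither $\Tr(B_{1i}B_{i1}B_{11}^n)$ nor $B_{ij}B_{ji}$ can appear in it --- but this over-listing is harmless and does not affect the conclusion.
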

  Taking $p=1$ in theorem \ref{maintheomat} we obtain the following corollary, which has been already proved in \cite{Nordenstam}.
 \begin{cor} If $(\Lambda^{(d)}(t),t\ge 0)$ is the process  of eigenvalues of a standard Brownian motion on $\operatorname{H}_d$   and $(\Lambda^{(d-1)}(t),t\ge 0)$ is the process  of eigenvalues of its principal minor of order $d-1$, then the processes 
 \[
 (\Lambda^{(d)}(t),\Lambda^{(d-1)}(t),t\ge 0)
 \]
  is Markovian.
 \end{cor}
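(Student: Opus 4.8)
The plan is to deduce the corollary directly from Theorem~\ref{maintheomat} by specializing to $p=1$. The first step is to unwind what that theorem asserts in this case. For $p=1$ the block $B_{11}$ is the top-left principal $(d-1)\times(d-1)$ submatrix of $B$, and the only index available in $\{2,\dots,p+1\}=\{2\}$ is $i=2$; undoing the block decomposition, the $2\times 2$ block matrix $\bigl(\begin{smallmatrix} B_{11}&B_{1i}\\ B_{i1}&B_{ii}\end{smallmatrix}\bigr)$ appearing in Theorem~\ref{maintheomat} (with $i=2$) is simply $B$ itself, so its eigenvalue process is $(\Lambda^{(d)}(t),t\ge 0)$.

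The second step is to observe that all the remaining objects in Theorem~\ref{maintheomat} are vacuous when $p=1$: the $3\times 3$ block matrix $\bigl(\begin{smallmatrix} B_{11}&B_{1i}&0\\ 0&0&B_{ij}\\ B_{j1}&0&0\end{smallmatrix}\bigr)$ and the complex processes $B_{ij}B_{ji}$ and $B_{ij}B_{jk}B_{ki}$ all require two or three \emph{distinct} indices drawn from the singleton $\{2,\dots,p+1\}=\{2\}$, which is impossible. Hence, for $p=1$, the conclusion of Theorem~\ref{maintheomat} reduces exactly to the statement that the eigenvalues of $B_{11}$ and the eigenvalues of $B$, taken together, form a Markov process.

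The third step is the identification with the processes in the statement: the eigenvalues of $B$ are $(\Lambda^{(d)}(t),t\ge 0)$ by definition, while the eigenvalues of $B_{11}$, the principal minor of order $d-1$, are $(\Lambda^{(d-1)}(t),t\ge 0)$. Therefore $(\Lambda^{(d)}(t),\Lambda^{(d-1)}(t),t\ge 0)$ is Markovian, which is the claim.

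There is no substantial obstacle, since the corollary is an immediate specialization of Theorem~\ref{maintheomat}; the only points that deserve a line of justification are that writing $B$ in block form does not alter its spectrum (so that the $2\times 2$ block matrix in the $p=1$ case really is $B$), and that the terms indexed by distinct integers in $\{2\}$ do not occur. In this way one recovers the Markov property of the eigenvalues of two consecutive minors established in \cite{Nordenstam}, as anticipated by the discussion preceding Theorem~\ref{maintheomat}.
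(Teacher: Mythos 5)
Your proof is correct and takes the same route as the paper, which simply specializes Theorem~\ref{maintheomat} to $p=1$; you merely spell out the vacuity of the extra terms (which require two or three distinct indices in the singleton $\{2\}$) and the identification of the $2\times2$ block matrix with $B$, points the paper leaves implicit.
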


\end{document}